\documentclass{article}

\usepackage{setspace}
\usepackage{amssymb,amsmath,amsthm}
\usepackage{subfigure}
\usepackage{calc}
\usepackage{verbatim}
\usepackage{enumerate}
\usepackage{cite}
\usepackage{epsfig}
\usepackage{graphicx}
\usepackage{graphics}
\usepackage {tikz}
\usetikzlibrary{automata,arrows,calc,positioning}
\usetikzlibrary{decorations.pathreplacing}
\usepackage{xcolor}\usepackage[hmargin=2cm,vmargin=2.5cm]{geometry}
\usepackage{chngcntr}
\usepackage[]{algorithm2e}
\counterwithin{figure}{section}


\newtheorem{definition}{Definition}
\newtheorem{lemma}[definition]{Lemma}
\newtheorem{proposition}[definition]{Proposition}
\newtheorem{theorem}[definition]{Theorem}
\newtheorem{corollary}[definition]{Corollary}
\newtheorem{observation}[definition]{Observation}

\newcommand{\avd}{\text{\rm{avd}}}
\newcommand{\um}{\text{\rm{unmatched}}}

\makeatletter
\renewcommand{\pod}[1]{\mathchoice
  {\allowbreak \if@display \mkern 18mu\else \mkern 8mu\fi (#1)}
  {\allowbreak \if@display \mkern 18mu\else \mkern 8mu\fi (#1)}
  {\mkern4mu(#1)}
  {\mkern4mu(#1)}
}

\title{On the unimodality of nearly well-dominated trees}

\author{Iain Beaton\thanks{Corresponding author}\\
\small Department of Mathematics \& Statistics\\[-0.8ex]
\small Acadia University\\[-0.8ex] 
\small Wolfville, NS\\
\small\tt iain.beaton@acadiau.ca\\
\and
Sam Schoonhoven\\
\small Department of Mathematics \& Statistics\\[-0.8ex]
\small Acadia University\\[-0.8ex] 
\small Wolfville, NS\\
\small\tt 152186s@acadiau.ca\\
}
\date{October 31, 2024}

\begin{document}

\maketitle

\begin{abstract}
A polynomial is said to be unimodal if its coefficients are non-decreasing and then non-increasing. The domination polynomial of a graph $G$ is the generating function of the number of dominating sets of each cardinality in $G$. In \cite{IntroDomPoly2014} Alikhani and Peng conjectured that all domination polynomials are unimodal. In this paper we show that not all trees have log-concave domination polynomial. We also give non-increasing and non-decreasing segments of coefficents in trees. This allows us to show the domination polynomial trees with $\Gamma(T)-\gamma(T)<3$ are unimodal.

\end{abstract}

\setstretch{1.4}

\section{Introduction}\label{sec:intro}

A subset of vertices $S$ of a (finite, undirected) graph $G=(V,E)$ is a {\em dominating set} if and only if every vertex of $G$ is either in $S$ or adjacent to a vertex of $S$ (equivalently, $N[S]=V$ where $N[S]$ is the union of the {\em closed neighbourhoods} $N[v]$ of $v$ for all $v \in S$). As for many graph properties, one can more thoroughly examine domination via generating functions. Fro a graph $G$ or order $n$, let $d_i$ denote be the number of dominating sets of a graph $G$ of cardinality $i$. The \emph{domination polynomial} $D(G,x)$ of $G$ is defined as

$$D(G,x) = \sum_{i=0}^{n} d_ix^i.$$

\noindent We direct the reader to \cite{2012AlikhaniPHD} for a thorough discussion of domination polynomials. A natural question for any graphs polynomial is whether or not the sequence of coefficients is unimodal: a polynomial with real coefficients $a_0 + a_1x + \cdots + a_nx^n$ is said to be \emph{unimodal} if there exists $0 \leq k \leq n$, such that

$$a_0 \leq \cdots \leq a_{k-1} \leq a_k \geq a_{k-1} \geq \cdots \geq a_{n}.$$

\noindent We refer to the largest coefficient as the \emph{mode} of the sequence. Additionally, we would say the coefficient sequence has its mode at $k$. Note that under this definition, the location of the mode (i.e $k$) may not be unique. Showing a graph polynomial is unimodal has been of interest for a variety of other graph polynomials. The matching polynomial \cite{1972Heilmann,1996Krattenthaler }, the independence polynomial of claw-free graphs \cite{2007Chudnovsky, 1990Hamidoune}, the $k$-dependent set polynomial \cite{2002Horrocks} and the absolute value of the coefficients of chromatic polynomials \cite{2012Huh} have all been shown to be unimodal. In each of these papers, the polynomial was shown to be \emph{log-concave} which then implies the absolute value of the coefficients of the polynomial is unimodal. A polynomial is log-concave if for every $1 \leq i \leq n-1$, $a_i^2 \geq a_{i-1}a_{i+1}$. This is often an easier result to prove as it only considers the coefficients ``locally" and does not require the location of the mode. The unimodality of domination polynomials has been an open problem since 2009.

\vspace{4mm}

\noindent \textbf{Conjecture} \textnormal{\cite{IntroDomPoly2014}} \emph{ The domination polynomial of any graph is unimodal.}

\vspace{4mm}

\noindent The current literature on unimodality of domination polynomials is as follows. Various families of graph have been shown to have unimodal domiantion polynoimals \cite{burcroff2023unimodality, galvin2024domination, lau2022more}. Some families of graphs have even been shown to have log-concave domination polynomials \cite{2014DomFamUnimodal}. However, not all domination polynomials are log-concave as shown by a counter example on nine vertices in \cite{beaton2022unimodality}. Thus any hope to prove unimodality of domination polynomials lies in lies in showing portions of coefficient sequence of $D(G,x)$ are monotonic. For a graph $G$ with order $n$, Alikhani and Peng \cite{IntroDomPoly2014} showed the coefficients were non-decreasing (i.e. $d_{i-1} \leq d_{i}$) when $1 \leq i \leq \lceil \frac{n}{2} \rceil$. In one of the Author's Ph.D. Thesis \cite{beaton2021dominating} it was shown that for a graph without isolated vertices, the coefficients were non-increasing (i.e. $d_{i-1} \geq d_{i}$) when $\lfloor \frac{3n}{4} \rfloor \leq i \leq n$. Additionally, for large enough minimal degree $\delta(G) \geq 2ln_2(n)$ then $D(G,x)$ have been shown to be unimodal \cite{beaton2022unimodality}. Burcoff and O'Brien \cite{ burcroff2023unimodality} showed the coefficients were non-increasing when $\lfloor \frac{n+\Gamma(G)}{2} \rfloor \leq i \leq n$ where $\Gamma(G)$ is the size of the largest minimal dominating set in $G$. 

The focus of this paper will be the unimodality in domination polynomial of trees. We will rely heavily on the structure of a rooted tree. We will use the following definitions through the paper for a tree $T$ rooted at a vertex $v$. The \emph{depth} of a vertex is its distance to $v$. Additionally, for two vertices $x$ and $y$, we say $x$ is a \emph{descendant} of $y$ if $y$ is in the unique path from $x$ to the root $v$. Moreover, $x$ is a \emph{child} or \emph{grandchild} of $y$ if $x$ is a descendant of $y$ and distance one or two respectfully from $y$. Conversely, $y$ is the \emph{parent} or \emph{grandparent} of $x$ if $x$ is the child or grandchild of $y$ respectfully. Note that in a tree rooted at $v$ each vertex other than $v$ has exactly one parent.

The paper is structured as follows. In Section \ref{sec:notLC} we give an infinite family of trees with non log-concave polynomials. In Section \ref{Sec_Increase} and \ref{Sec_decrease} we show two monotonic portions of coefficients which together yield 

$$d_{\gamma(G)} \leq \cdots \leq d_{\left\lfloor\frac{n+2\gamma(T)+1}{3} \right\rfloor} \hspace{3mm}\text{ and }\hspace{3mm} d_{\left\lceil\frac{n+2\Gamma(T)-2}{3} \right\rceil} \geq \cdots \geq d_{n}.$$

\noindent This allows us to conclude that trees with $\Gamma(T)-\gamma(T)<3$ have unimodal domination polynomial. We conclude with applications to the average order of dominating sets.

\section{$D(G,x)$ is not always Log-Concave for Trees}
\label{sec:notLC}

In \cite{beaton2022unimodality} it was shown that not all domination polynomials are log-concave. This was given by a single counterexample on nine vertices, which through computation was also shown to be the smallest counterexample. In this section we give an infinite family of trees which are also not log-concave. This construction was inspired by \cite{kadrawi2023independence} which surprisingly showed that the independence polynomial of trees is not always log-concave. Consider the construction of the tree $T_k$ in Figure \ref{fig:DomPolynLC}.

\begin{center}
\begin{figure}[!h]
\begin{center}
\begin{tikzpicture}[> = stealth,semithick]
\begin{scope}[every node/.style={circle,thick,draw}]

        \node[shape=circle,draw=black,fill=white, label={[label distance=0.25mm]175:$v_0$}]  (v0) at (5.5,4) {};
    \node[shape=circle,draw=black,fill=white, label=left:$v_1$] (v1) at (2,3) {};
    \node[shape=circle,draw=black,fill=white, label=left:$v_2$] (v2) at (5.5,3) {};   
    \node[shape=circle,draw=black,fill=white, label=right:$v_3$] (v3) at (9,3) {};   
    \node[shape=circle,draw=black,fill=white, label=left:$x_{11}$] (v11a) at (0,2) {};
    \node[shape=circle,draw=black,fill=white, label=left:$y_{11}$] (v11b) at (0,1) {};
    \node[shape=circle,draw=black,fill=white, label=left:$z_{11}$] (v11c) at (0,0) {};   
    \node (v12a) at (1,2) {};
    \node (v12b) at (1,1) {};
    \node (v12c) at (1,0) {}; 
    \node (v13a) at (3,2) {};
    \node (v13b) at (3,1) {};
    \node (v13c) at (3,0) {}; 
    
    \node (v21a) at (0+4,2) {};
    \node (v21b) at (0+4,1) {};
    \node (v21c) at (0+4,0) {};   
    \node (v22a) at (1+4,2) {};
    \node (v22b) at (1+4,1) {};
    \node (v22c) at (1+4,0) {}; 
    \node (v23a) at (3+4,2) {};
    \node (v23b) at (3+4,1) {};
    \node (v23c) at (3+4,0) {}; 

    \node (v31a) at (0+8,2) {};
    \node (v31b) at (0+8,1) {};
    \node (v31c) at (0+8,0) {};   
    \node (v32a) at (1+8,2) {};
    \node (v32b) at (1+8,1) {};
    \node (v32c) at (1+8,0) {}; 
    \node[shape=circle,draw=black,fill=white, label=right:$x_{3k}$] (v33a) at (3+8,2) {};
    \node[shape=circle,draw=black,fill=white, label=right:$y_{3k}$] (v33b) at (3+8,1) {};
    \node[shape=circle,draw=black,fill=white, label=right:$z_{3k}$] (v33c) at (3+8,0) {}; 
\end{scope}

\begin{scope}
    \path [-] (v0) edge node {} (v1);
    \path [-] (v0) edge node {} (v2);
    \path [-] (v0) edge node {} (v3);
    
    \path [-] (v1) edge node {} (v11a); 
    \path [-] (v1) edge node {} (v12a);
    \path [-] (v1) edge node {} (v13a);
    \path [-] (v11a) edge node {} (v11b);
    \path [-] (v11b) edge node {} (v11c);
    \path [-] (v12a) edge node {} (v12b);
    \path [-] (v12b) edge node {} (v12c);
    \path [-] (v13a) edge node {} (v13b);
    \path [-] (v13b) edge node {} (v13c);
    
    \path [-] (v2) edge node {} (v21a); 
    \path [-] (v2) edge node {} (v22a);
    \path [-] (v2) edge node {} (v23a);
    \path [-] (v21a) edge node {} (v21b);
    \path [-] (v21b) edge node {} (v21c);
    \path [-] (v22a) edge node {} (v22b);
    \path [-] (v22b) edge node {} (v22c);
    \path [-] (v23a) edge node {} (v23b);
    \path [-] (v23b) edge node {} (v23c);
    
    \path [-] (v3) edge node {} (v31a); 
    \path [-] (v3) edge node {} (v32a);
    \path [-] (v3) edge node {} (v33a);
    \path [-] (v31a) edge node {} (v31b);
    \path [-] (v31b) edge node {} (v31c);
    \path [-] (v32a) edge node {} (v32b);
    \path [-] (v32b) edge node {} (v32c);
    \path [-] (v33a) edge node {} (v33b);
    \path [-] (v33b) edge node {} (v33c);
\end{scope}

\path (v12b) -- node[auto=false]{\ldots} (v13b);
\draw [decorate,decoration={brace,amplitude=5pt,mirror,raise=2ex}](0,0) -- (3,0) node[midway,yshift=-2em]{$k$};

\path (v22b) -- node[auto=false]{\ldots} (v23b);
\draw [decorate,decoration={brace,amplitude=5pt,mirror,raise=2ex}](0+4,0) -- (3+4,0) node[midway,yshift=-2em]{$k$};

\path (v32b) -- node[auto=false]{\ldots} (v33b);
\draw [decorate,decoration={brace,amplitude=5pt,mirror,raise=2ex}](0+8,0) -- (3+8,0) node[midway,yshift=-2em]{$k$};
\end{tikzpicture}
\end{center}
\caption{The tree $T_k$}%
\label{fig:DomPolynLC}%
\end{figure}
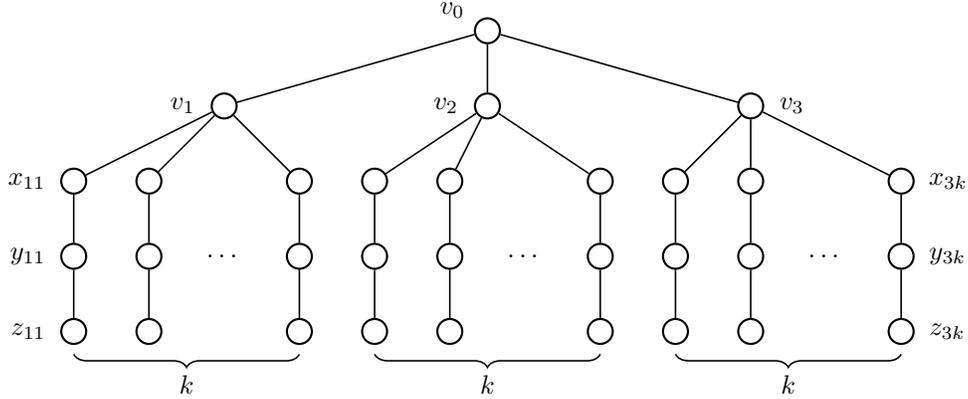
\end{center}

\noindent We will now show that $D(T_k,x)$ is not log-concave for sufficiently large $k$.

\begin{proposition}
\label{prop:NotLC}
When $k \geq 4$ then $D(T_k,x)$ is not log-concave.
\end{proposition}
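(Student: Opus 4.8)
The plan is to compute $D(T_k,x)$ exactly from the rooted structure of $T_k$ and then locate a single index at which log-concavity is violated. I would build the polynomial bottom-up, recording for each rooted subtree three generating functions according to the state of its root $r$: $r$ lies in the set ($\alpha$); $r$ is excluded but already dominated within the subtree ($\beta$); and $r$ is excluded and not yet dominated ($\gamma$). For a pendant path $x_{ij}-y_{ij}-z_{ij}$ rooted at $x_{ij}$ one computes $\alpha=x^3+2x^2$, $\beta=x^2+x$, $\gamma=x$, so the three combinations governing the joins are
\[
P:=\alpha+\beta+\gamma=x(x+1)(x+2),\quad Q:=\alpha+\beta=x(x^2+3x+1),\quad B:=\beta=x(x+1).
\]
Attaching $k$ such paths at each $v_i$ gives $\alpha(v_i)=xP^k$, $\beta(v_i)=Q^k-B^k$, $\gamma(v_i)=B^k$; joining the three $v_i$ at $v_0$ and summing the two dominating root-states $\alpha(v_0)+\beta(v_0)$ yields
\[
D(T_k,x)=x\left(xP^k+Q^k\right)^3+\left(xP^k+Q^k-B^k\right)^3-\left(Q^k-B^k\right)^3 .
\]

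Next I would extract the three lowest non-vanishing coefficients. Since $\gamma(T_k)=3k+1$ (each of the $3k$ paths forces a vertex of $\{y_{ij},z_{ij}\}$, and $v_0$ forces one vertex of $\{v_0,\dots,v_3\}$, the unique minimum dominating set being $\{v_0\}$ together with all $y_{ij}$), one has $d_{3k+1}=1$. A short count, confirmed by the displayed formula, gives $d_{3k+2}=3\cdot 2^k+9k$, while expanding the cube shows $d_{3k+3}=8^k+(6k+3)4^k+R(k)$ with $R(k)>0$. The decisive feature is the base of the exponential: $d_{3k+2}$ grows like $2^k$, but $d_{3k+3}$ grows like $8^k=2^{3k}$. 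Combinatorially the $8^k$ term counts the dominating sets in which all of $v_1,v_2,v_3$ are chosen (so $v_0$ is dominated) while in each of the $3k$ paths exactly one of $y_{ij}$ or $z_{ij}$ is taken --- both valid because $x_{ij}$ is now dominated by $v_i$ --- giving $2^{3k}$ sets of size $3k+3$.

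Finally I would test log-concavity at $i=3k+2$. Because $d_{3k+1}=1$, the required inequality $d_{3k+2}^2\ge d_{3k+1}d_{3k+3}$ reduces to $d_{3k+2}^2\ge d_{3k+3}$, and it is this that fails. Using the lower bound $d_{3k+3}\ge 8^k+(6k+3)4^k$ and $d_{3k+2}^2=9\cdot 4^k+54k\,2^k+81k^2$,
\[
d_{3k+3}-d_{3k+2}^2\ge \bigl(8^k-54k\,2^k\bigr)+\bigl((6k-6)4^k-81k^2\bigr),
\]
and both bracketed quantities are positive once $k\ge 4$ (for the first, $4^k\ge 54k$; for the second, the factor $6k-6\ge 18$ makes $4^k$ swamp $81k^2$). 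Hence $d_{3k+2}^2<d_{3k+1}d_{3k+3}$ and $D(T_k,x)$ is not log-concave. I expect the main obstacle to be the careful coefficient bookkeeping for $d_{3k+3}$ --- several terms of the expanded cube contribute to degree $3k+3$ --- together with isolating the clean threshold $k\ge 4$; the conceptual content, that three symmetric branches turn a product into a cube and thereby manufacture a base-$8$ against base-$4$ mismatch, is exactly the mechanism that defeats log-concavity.
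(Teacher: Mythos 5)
Your proof is correct, and it reaches the key coefficients by a genuinely different route than the paper. The paper works purely combinatorially: it identifies $Y_1\cup Y_2\cup Y_3\cup\{v_0\}$ as the unique minimum dominating set (so $d_{3k+1}=1$), counts the dominating sets of size $3k+2$ exactly (getting $9k+3\cdot 2^k$, split into non-minimal sets and two families of minimal ones), and lower-bounds $d_{3k+3}$ by a case analysis on how many of $v_1,v_2,v_3$ lie in the set, obtaining $d_{3k+3}\ge 3(5k+k^2)2^k+3(k+1)2^{2k}+2^{3k}$. You instead compute $D(T_k,x)$ in closed form via a three-state recursion on the rooted tree and read the lowest coefficients off the expression $x\bigl(xP^k+Q^k\bigr)^3+\bigl(xP^k+Q^k-B^k\bigr)^3-\bigl(Q^k-B^k\bigr)^3$; your values $d_{3k+1}=1$ and $d_{3k+2}=9k+3\cdot 2^k$ agree with the paper's exact counts, and your bound $d_{3k+3}\ge 8^k+(6k+3)4^k$ is slightly sharper than the paper's in the $4^k$ term ($6k+3$ versus $3k+3$). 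The endgame is identical in both arguments: since $d_{3k+1}=1$, log-concavity at $i=3k+2$ reduces to $d_{3k+2}^2\ge d_{3k+3}$, which fails for $k\ge 4$ because $8^k$ swamps $9\cdot 4^k+54k\,2^k+81k^2$; your threshold checks at $k=4$ are sound. What your approach buys is exactness and reusability: the closed form certifies $\gamma(T_k)=3k+1$ and $d_{3k+1}=1$ automatically from its lowest-degree term, and any coefficient could be inspected the same way. What the paper's approach buys is a self-contained counting argument requiring no generating-function machinery, at the cost of a more delicate case analysis and only a lower bound on $d_{3k+3}$ (which suffices). One small caution: your parenthetical uniqueness argument for the minimum dominating set is too terse as stated---the missing observation is that if the vertex covering $v_0$ were some $v_i$ with $i\neq 0$, the other two $v_j$ would be left undominated---but this is harmless, since uniqueness also falls out of your formula.
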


\begin{proof}
Consider the vertices of $T_k$ as labelled in Figure \ref{fig:DomPolynLC}.
Note that $T_k$ is rooted at $v_0$ and vertices $x_{ij}$, $y_{ij}$, and $z_{ij}$ are at depth 2, 3, and 4 respectively for all $1 \leq i \leq 3$ and $1 \leq j \leq k$.
Additionally vertex $x_{ij}$, $y_{ij}$, and $z_{ij}$ are all descendants of $v_i$ and induce the $j^{th}$ 3-path which lies below $v_i$.
For each $i$, let $X_i$, $Y_i$ and $Z_i$ denote the set of all $x_{ij}$, $y_{ij}$, and $x_{ij}$ respectively.
Note that $|X_i|=|Y_i|=|Y_i|=k$.
Additionally note that $T_k$ has $9k+4$ vertices and $\gamma(T_k)=3k+1$.

The set $Y_1 \cup Y_2 \cup Y_3 \cup \{v_0\}$ is the only dominating set of size $3k+1$.
Let $S_\gamma$ denote this unique minimum dominating set.
Thus $d_{\gamma(T_k)} = 1$.
We will now consider $d_{\gamma(T_k)+1}$ and $d_{\gamma(T_k)+2}$.

First consider dominating sets of size $\gamma(T_k)+1$.
The non-minimal dominating sets are formed by adding a vertex any $v \notin S_\gamma$ to $S_\gamma$.
Thus there are $6K+3$ non-minimal dominating sets of size $\gamma(T_k)+1$.
The minimal dominating sets of size $\gamma(T_k)+1$ take on two forms.
The first form is $S_\gamma \cup \{x_{ij}, z_{ij}\} - \{y_{ij}\}$ for any $y_{ij}$.
There are $3k$ such minimal dominating sets.
The second form is $S_\gamma \cup \{v_i\} \cup Z' - Y'$ for each $i$ where $Z'$ is non-empty any subset $Z' \subseteq Z_i$ and $Y'$ is the corresponding neighbours of $Z'$ in $Y$.
There are $3(2^k-1)$ minimal dominating sets of this form.
Altogether we have $d_{\gamma(T_k)+1}=9k+3+3(2^k-1)=9k+3 \cdot 2^k$.

It suffices to only give a lower bound on $d_{\gamma(T_k)+2}$.
Let $S$ be a dominating set of  size $\gamma(T_k)+2$.
We will consider three disjoint cases based on how many vertices of $\{v_1, v_2, v_3\}$ are in $S$.
For a given $i$, let $ZY_i$ denote the collection of $k$-sets which contain exactly on vertex from $\{y_{ij}, z_{ij}\}$ for all $1 \leq j \leq k$.
Note that $|ZY_i|=2^k$ for each $i$.

\vspace{2mm}

\noindent \underline{Case 1:} $S$ contains exactly one vertex in $\{v_1, v_2, v_3\}$.
Without loss of generality let $v_1 \in S$.
If $v_0 \in S$, then $S=\{v_0, v_1, w\} \cup T_1 \cup Y_2 \cup Y_3$ is a dominating set where $T_1 \in ZY_1$ and $w \in X_1 \cup X_2 \cup X_3\cup Z_2 \cup Z_3$.
There are $5k \cdot 2^k$ such dominating sets.
If $v_0 \notin S$, then $S=\{v_1, x_2, x_3\} \cup T_1 \cup Y_2 \cup Y_3$ is a dominating set where $T_1 \in ZY_1$ and $x_2 \in X_2$ and $x_3 \in X_3$.
There are $k^22^k$ such dominating sets.
There are three ways to choose one vertex from $\{v_1, v_2, v_3\}$.
Thus by symmetry that gives at least $3(5k+k^2) \cdot 2^k$ dominating sets containing exactly one vertex from $\{v_1, v_2, v_3\}$.

\vspace{2mm}

\noindent \underline{Case 2:} $S$ contains exactly two vertices in $\{v_1, v_2, v_3\}$.
Without loss of generality let $v_1, v_2 \in S$.
Then $S=\{w, v_1, v_2\} \cup T_1 \cup T_2 \cup Y_3$ is a dominating set where $T_1 \in ZY_1$, $T_2 \in ZY_2$, and $w \in \{v_0\} \cup X_3$.
There are three ways to choose two vertices from $\{v_1, v_2, v_3\}$.
Thus there are at least $3(k+1)2^{2k}$ dominating sets which contain exactly two vertices in $\{v_1, v_2, v_3\}$

\vspace{2mm}

\noindent \underline{Case 3:} $S$ contains all three vertices in $\{v_1, v_2, v_3\}$.
Then $S=\{v_1, v_2, v_3\} \cup T_1 \cup T_2 \cup T_3$ is a dominating set where $T_1 \in ZY_1$, $T_2 \in ZY_2$, and $T_3 \in ZY_3$.
Thus there are at least $2^{3k}$ dominating sets which contain exactly two vertices in $\{v_1, v_2, v_3\}$

\vspace{2mm} 

\noindent Altogether that gives the lower bound $d_{\gamma(T_k)+2} \geq 3(5k+k^2) \cdot 2^k + 3(k+1)2^{2k} + 2^{3k}$.

Finally we will show $d_{\gamma(T_k)+2}d_{\gamma(T_k)}>(d_{\gamma(T_k)+1})^2$ when $k \geq 4$. 
Note that 

\vspace{-2mm}

\begin{align*}
d_{\gamma(T_k)+2}d_{\gamma(T_k)} &\geq 3(5k+k^2) \cdot 2^k + 3(k+1)2^{2k} + 2^{3k}, \text{and} \\
(d_{\gamma(T_k)+1})^2  &= 81k^2+54k \cdot 2^k + 9 \cdot 2^{2k}.
\end{align*}

\noindent Note that for all $k\geq 4$ then $6 \cdot 2^{2k} > 81k^2$ thus

\vspace{-2mm}

\begin{align*}
d_{\gamma(T_k)+2}d_{\gamma(T_k)} - (d_{\gamma(T_k)+1})^2 &\geq 2^{3k} + 3(k-2)2^{2k} +(3k^2-39k)2^k - 81k^2 \\
  &>2^{3k}+ 3(k-2)2^{2k} +(3k^2-39k)2^k-6 \cdot 2^{2k} \\
  &= 3(k-4)2^{2k} +(2^{2k}+3k^2-39k)2^k
\end{align*}

\noindent For all $k\geq 4$ we have $k-4 \geq 0$ and $2^{2k}+3k^2-39k \geq 0$ therefore $d_{\gamma(T_k)+2}d_{\gamma(T_k)} - (d_{\gamma(T_k)+1})^2 >0$ and $D(T_k,x)$ is not log-conave for all $k \geq 4$.

\end{proof}

\section{Non-decreasing Segment of Coefficients}
\label{Sec_Increase}

In this section we will show that

$$d_{\gamma(T)} \leq d_{\gamma(T)+1} \leq  \cdots \leq d_{\left\lfloor\frac{n+2\gamma(T)+1}{3} \right\rfloor},$$

\noindent where $T$ is a tree of order $n$. We first begin with some useful definitions from \cite{2021Beaton} which categorize vertices according to a  dominating set $S$. For a graph $G$, let $\mathcal{D}(G)$ denotes the collection of dominating sets of $G$. For a dominating set $S$ of $G$ let

$$a(S) = \{v \in S : S-v \notin \mathcal{D}(G)\},$$

\noindent denote the set of \emph{critical} vertices of $S$ with respect to domination (in that their removal makes the set no longer dominating). Note that a dominating set $S$ is minimal if and only if $S=a(S)$. To contrast critical vertices, we say a vertex in $S$ is \emph{supported} if it is not critical. That is, $v$ is a supported vertex of $S$ if $v \in S-a(S)$. We say a supported vertex $v \in S - a(S)$ is \emph{supported by} $u \in S$ if $N[u] \cap N[v] \neq \emptyset$. This brings us to our first observation regarding critical vertices.

\begin{observation}
\label{obs:IsoImpliesCrit}
 Let $G$ be a graph with dominating set $S$ containing $v$. If $v$ has no neighbours in $S$ then $v \in a(S)$.
\end{observation}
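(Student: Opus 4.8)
The plan is to unwind the definition of $a(S)$ and exhibit a single vertex that witnesses the failure of $S-v$ to dominate. Recall that $v \in a(S)$ precisely when $S - v \notin \mathcal{D}(G)$, so it suffices to produce one vertex of $G$ lying outside $N[S-v]$. The natural candidate is $v$ itself, since its only ``reason'' to be dominated by $S$ is its own membership.

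First I would recall that a vertex $u$ is dominated by a set $W$ if and only if $u \in N[W]$, that is, $u \in W$ or $u$ has a neighbour in $W$. Applying this with $W = S - v$ and $u = v$, observe that $v \notin S - v$, so domination of $v$ by $S-v$ would force $v$ to have a neighbour in $S - v$. But by hypothesis $v$ has no neighbours in $S$, hence none in the smaller set $S - v$. Therefore $v \notin N[S-v]$, which gives $N[S-v] \neq V$, so $S - v$ is not a dominating set. By the definition of $a(S)$ this yields $v \in a(S)$, as required.

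There is essentially no obstacle here: the statement follows immediately from the definitions of domination and of $a(S)$. The only point requiring any care is to apply the closed neighbourhood correctly, namely that once $v$ is removed from $S$ it can only be re-dominated by a neighbour lying in $S$, which the hypothesis explicitly forbids; this is precisely what makes $v$ critical.
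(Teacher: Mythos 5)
Your proof is correct: it unwinds the definition of $a(S)$ and shows that $v$ itself is undominated by $S-v$, which is exactly the (one-line) reasoning the paper leaves implicit by stating this as an unproved observation. Nothing is missing, and there is no alternative route to compare against.
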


\noindent As it turns out, the difference between $d_i$ and $d_{i-1}$ depends on the total number of critical vertices in dominating sets of those sizes.

\begin{lemma}
\label{lem:aidi}
\textnormal{\cite{beaton2021dominating}} For a graph $G$ with $n$ vertices.
  
$$a(G,i)=\sum\limits_{S \in \mathcal{D}_i(G)}|a(S)| = id_i(G)-(n-i+1)d_{i-1}(G),$$

\noindent where $\mathcal{D}_i(G)$ denotes the collection of dominating sets of size $i$.
\end{lemma}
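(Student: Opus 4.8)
The plan is to prove the identity by double counting the incidences between dominating sets of size $i$ and their vertices, split according to whether each vertex is critical or supported. First I would count the total number of pairs $(S,v)$ with $S \in \mathcal{D}_i(G)$ and $v \in S$; since every such $S$ has exactly $i$ vertices, this total is $id_i(G)$. Each vertex $v \in S$ is either critical (so $v \in a(S)$) or supported (so $v \in S - a(S)$), which gives the disjoint decomposition
$$id_i(G) = \sum_{S \in \mathcal{D}_i(G)}|a(S)| + \sum_{S \in \mathcal{D}_i(G)}|S - a(S)|.$$
Thus it suffices to show that the total number of supported vertices, $\sum_{S}|S - a(S)|$, equals $(n-i+1)d_{i-1}(G)$, after which rearranging yields the claimed formula.

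The key step is a bijection between two families of pairs. Let $P$ be the set of pairs $(S,v)$ with $S \in \mathcal{D}_i(G)$ and $v$ a supported vertex of $S$, and let $Q$ be the set of pairs $(T,w)$ with $T \in \mathcal{D}_{i-1}(G)$ and $w \notin T$. I would define $\varphi(S,v) = (S - v, v)$: since $v$ is supported, $S - v$ is by definition still a dominating set, it has size $i-1$, and $v \notin S - v$, so $\varphi(S,v) \in Q$. In the reverse direction, define $\psi(T,w) = (T \cup \{w\}, w)$; because adding a vertex to a dominating set leaves it dominating, we have $T \cup \{w\} \in \mathcal{D}_i(G)$, and $w$ is supported in it since deleting $w$ returns the dominating set $T$. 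These two maps are mutually inverse, so $|P| = |Q|$, and by construction $|P| = \sum_{S}|S - a(S)|$.

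Finally I would evaluate $|Q|$ directly: for each of the $d_{i-1}(G)$ dominating sets $T$ of size $i-1$ there are exactly $n - (i-1) = n-i+1$ vertices $w \notin T$, whence $|Q| = (n-i+1)d_{i-1}(G)$. Combining this with the displayed decomposition gives $\sum_{S}|a(S)| = id_i(G) - (n-i+1)d_{i-1}(G)$, as required. I do not expect a serious obstacle; the only point needing care is checking that $\varphi$ and $\psi$ are well defined and inverse to each other. This rests on two elementary facts that follow straight from the definitions: that supportedness of $v$ is precisely the condition making $S - v$ dominating, and that domination is preserved under adding vertices, so that $\psi$ indeed lands in $\mathcal{D}_i(G)$.
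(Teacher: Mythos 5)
Your proof is correct. The paper itself states this lemma without proof, citing the author's thesis \cite{beaton2021dominating}, so there is nothing in the present document to compare against line by line; but your argument is the standard one for this identity: decompose the $id_i(G)$ incidences $(S,v)$, $v\in S$, into critical and supported ones, then match each pair $(S,v)$ with $v$ supported bijectively to a pair $(S-v,v)$ consisting of a dominating set of size $i-1$ together with a vertex outside it, which is counted by $(n-i+1)d_{i-1}(G)$. Both directions of your bijection are justified exactly where they need to be (supportedness of $v$ is by definition the statement that $S-v$ dominates, and domination is preserved by adding vertices), so there is no gap.
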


\begin{observation}
\label{obs:adi}
\textnormal{\cite{beaton2021dominating}} For a graph $G$ on $n$ vertices $d_i(G) \leq d_{i-1}(G)$ if and only if $a(G,i) \leq (2i-n-1)d_{i}(G) $.
\end{observation}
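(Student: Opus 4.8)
The plan is to derive the stated equivalence directly from the identity in Lemma~\ref{lem:aidi}, which already expresses $a(G,i)$ in terms of $d_i(G)$ and $d_{i-1}(G)$. The whole statement is an algebraic rearrangement of that identity, so the argument will be short; the only point requiring care is tracking the sign of the factor by which we multiply or divide.

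First I would recall the identity $a(G,i) = i\,d_i(G) - (n-i+1)d_{i-1}(G)$ and solve it for $d_{i-1}(G)$, obtaining $d_{i-1}(G) = \frac{i\,d_i(G) - a(G,i)}{n-i+1}$. Since the coefficients are indexed by $0 \le i \le n$ and we are comparing $d_i$ with $d_{i-1}$, we have $1 \le i \le n$, so $n-i+1 \ge 1 > 0$. This positivity is the crucial (if elementary) observation: it guarantees that clearing the denominator preserves the direction of any inequality.

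Next I would substitute this expression into the inequality $d_i(G) \le d_{i-1}(G)$. Multiplying both sides through by the positive quantity $n-i+1$ turns it into $(n-i+1)d_i(G) \le i\,d_i(G) - a(G,i)$, and isolating $a(G,i)$ yields $a(G,i) \le \big(i-(n-i+1)\big)d_i(G) = (2i-n-1)d_i(G)$. Because $n-i+1 > 0$, every step is reversible, so this chain of rewrites establishes both directions of the ``if and only if'' at once.

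There is no genuine obstacle here beyond bookkeeping, as the substantive content is carried entirely by Lemma~\ref{lem:aidi}. The one thing I would be careful not to overlook is the implicit hypothesis $i \le n$, which is what makes $n-i+1$ positive---without it the direction of the inequality could flip---but for the coefficient comparisons of interest this always holds.
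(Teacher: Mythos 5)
Your proof is correct and is essentially the intended argument: the paper cites this observation (alongside Lemma~\ref{lem:aidi}) from \cite{beaton2021dominating}, and the only content is exactly the algebraic rearrangement of the identity $a(G,i)=i\,d_i(G)-(n-i+1)d_{i-1}(G)$ you carry out. Your explicit attention to the positivity of $n-i+1$ (so that every step is a reversible equivalence) is precisely the one point that needs care, and you handle it properly.
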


%
%
%

We will now work towards a lemma which bounds $a(S)$ for any dominating set $S$ of a tree $T$. Clearly for any minimal dominating set $M$, we have that $a(M)=M$. Moreover $\gamma(T) \leq a(M)$. As we add vertices to $M$ to create other dominating sets  $S$, the question becomes how do this lower bound change? It turns out that the lower bound decreases by 1 for each additional vertex above $\gamma(T)$ and $\Gamma(T)$ respectively.

\begin{lemma}
\label{lem_a(S)lowerbound}
If T is a tree then for any dominating set $S \in \mathcal{D}(T)$ then $|a(S)| \geq 2\gamma(T)-|S|$.
\end{lemma}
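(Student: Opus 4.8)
The plan is to prove the bound by induction on $|S|$, peeling off supported vertices one at a time. Since $S$ dominates we always have $|S| \ge \gamma(T)$. For the base case, suppose $S$ is minimal; then $a(S)=S$, so $|a(S)| = |S| \ge \gamma(T) \ge 2\gamma(T)-|S|$, as required. For the inductive step I would assume $S$ is not minimal and delete a single, carefully chosen supported vertex.

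First I would record two easy facts. If $v \in S$ is supported then $S-v$ is again dominating, and moreover every critical vertex of $S$ survives in $S-v$: deleting a vertex only enlarges private neighbourhoods, so $\Priv{S-v}{u} \supseteq \Priv{S}{u}$ for $u \neq v$, giving $a(S) \subseteq a(S-v)$. Hence $a(S-v)$ is the disjoint union of $a(S)$ with a set of \emph{newly critical} vertices, and it suffices to bound the number $t$ of these. If I can guarantee $t \le 1$ for a suitable $v$, then the inductive hypothesis $|a(S-v)| \ge 2\gamma(T)-|S|+1$ gives $|a(S)| \ge |a(S-v)|-1 \ge 2\gamma(T)-|S|$.

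The heart of the argument is the choice of $v$ and the proof that $t\le 1$. I would root $T$ arbitrarily and take $v$ to be a supported vertex of \emph{maximum depth}. A vertex $u$ is newly critical exactly when it was supported in $S$ but acquires a private neighbour once $v$ is deleted; unwinding the definitions, such a $u$ must have a witness $w \in N[u]\cap N[v]$ with $N[w]\cap S = \{u,v\}$. Distinct newly critical vertices have distinct witnesses, and every witness lies in $N[v]$, i.e.\ is $v$, a child of $v$, or the parent $\mathrm{par}(v)$. Maximality of the depth of $v$ is what controls the situation: a newly critical $u$ is itself supported, hence has depth at most that of $v$, which rules out every witness that is a child of $v$ (these would force $u$ to be a descendant of $v$). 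This leaves only the witnesses $v$ and $\mathrm{par}(v)$, so $t\le 2$ a priori, and a short case split on whether $\mathrm{par}(v)\in S$ forces $t\le 1$: if $\mathrm{par}(v)\in S$ both surviving witnesses can only certify $u=\mathrm{par}(v)$, while if $\mathrm{par}(v)\notin S$ the witness $v$ certifies nothing and the witness $\mathrm{par}(v)$ pins $u$ down uniquely via $N[\mathrm{par}(v)]\cap S = \{u,v\}$.

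I expect this last case analysis — showing at most one new critical vertex can appear — to be the main obstacle, and it is where the tree hypothesis is essential: acyclicity makes common neighbourhoods (hence witnesses) unique and confines all newly critical vertices to the immediate vicinity of $v$. The choice of $v$ as a deepest supported vertex is crucial; deleting an arbitrary supported vertex (for instance the centre of a path on three vertices) can destroy two critical vertices at once, so a naive induction would fail.
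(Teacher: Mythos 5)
Your proof is correct and follows essentially the same route as the paper's: induction that peels off a supported vertex of maximum depth, with the key step showing its removal creates at most one new critical vertex via uniqueness of witnesses (common closed neighbours) in a tree. If anything, your case split on whether $\mathrm{par}(v)\in S$ is slightly more careful than the paper's argument, which tacitly assumes each witness is a common neighbour distinct from $v$ itself and so concludes both witnesses equal $\mathrm{par}(v)$.
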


\begin{proof}
For simplicity denote $\gamma (T)$ by $\gamma$.
Let $t \geq 0$ be the integer such that $|S|= \gamma+t$.
It then suffices to show $|a(S)| \geq \gamma-t$.
To do this we will induct on $t\ge0$.
For $t=0$, $|S|=\gamma$ and $|a(S)|=\gamma$ so our base case is satisfied.
Suppose our statement holds for $0\le t\le k-1$, $k\in\mathbb{N}$.
Then, for $t=k$ we have $|S|=\gamma+k$ and we wish to show that $|a(S)|\ge\gamma-k$.
First note that if $a(S)$ has no supported vertices then $|a(S)|=|S|\ge\gamma-k$.
Thus we may assume that $S$ contains at least one supported vertex.
For any supported vertex $v\in S$ we have that $|S-v|=\gamma+k-1$. 
So by our inductive hypothesis $$|a(S-v)|\ge\gamma-(k-1)=\gamma-k+1.$$
Thus, it suffices to find a supported vertex $v$ such that $|a(S)|+1\ge|a(S-v)|$.
That is, the removal of $v$ from $S$ creates at most $1$ more critical vertex.
To find this $v$, root $T$ at any vertex and choose $v$ to be a supported vertex with maximum depth.
Now, suppose that $v$ does not satisfy our condition such that $|a(S)|+2\le|a(S-v)|$.
That is, the removal of $v$ from $S$ creates at least $2$ more critical vertices.
Note that $a(S)\subseteq a(S-v)$ so let $\{x_1,x_2\}\subseteq a(S-v)-a(S)$ be our new critical vertices.
Also note that $x_1$ and $x_2$ were supported vertices in $S$ with $x_1,x_2\neq v$ and $x_1\neq x_2$.
Let $u_1$ and $u_2$ be the vertices which only have $x_1$ and $x_2$ as neighbours respectively in $S-v$.
Therefore 

$$N[u_1]\cap S=\{x_1,v\}\text{ and }N[u_2]\cap S=\{x_2,v\}.$$

\noindent Note $u_1\neq u_2$ otherwise $u_1$ would be dominated by both $x_1$ and $x_2$ in $S-v$.
As $x_1$ and $x_2$  were supported vertices in $S$ then they are no deeper in $T$ than $v$.
Thus any common neighbour of $v$ and $x_1$ or $x_2$ must be the parent of $v$.
Hence $u_1$ and $u_2$ are both the parent of $v$ so $u_1= u_2$ which contradicts $u_1\neq u_2$. 
\end{proof}

The previous lemma, together with Observation \ref{obs:adi} give us our main result in this section regarding domination polynomials.

\begin{theorem}
\label{thm:increasing}
Let $T$ be a tree of order $n$. Then

$$d_{\gamma(T)} \leq d_{\gamma(T)+1} \leq \cdots \leq d_{\left\lfloor\frac{n+2\gamma(T)+1}{3} \right\rfloor}.$$

\noindent where $d_i$ denotes the number of dominating sets in $T$ of size $i$.
\end{theorem}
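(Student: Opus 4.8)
The plan is to combine Lemma~\ref{lem_a(S)lowerbound} with the characterization in Observation~\ref{obs:adi}. Recall that Observation~\ref{obs:adi} states $d_i(T) \geq d_{i-1}(T)$ fails (i.e. we get the \emph{reverse} inequality) precisely when $a(T,i) \leq (2i-n-1)d_i(T)$; equivalently, to prove the non-decreasing inequality $d_{i-1}(T) \leq d_i(T)$ it suffices to show $a(T,i) \geq (2i-n-1)d_i(T)$. So the whole theorem reduces to establishing this single inequality on $a(T,i)$ for every $i$ in the range $\gamma(T)+1 \leq i \leq \left\lfloor\frac{n+2\gamma(T)+1}{3}\right\rfloor$.

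First I would unwind the definition $a(T,i) = \sum_{S \in \mathcal{D}_i(T)} |a(S)|$ and apply Lemma~\ref{lem_a(S)lowerbound} term by term. Every dominating set $S$ of size $i$ satisfies $|a(S)| \geq 2\gamma(T) - i$, so summing over all $d_i(T)$ such sets gives the clean bound
\begin{equation*}
a(T,i) \;=\; \sum_{S \in \mathcal{D}_i(T)} |a(S)| \;\geq\; (2\gamma(T) - i)\, d_i(T).
\end{equation*}
This is the key structural input, and it is the step where the tree hypothesis actually gets used (through the lemma).

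Next I would plug this lower bound into the sufficient condition $a(T,i) \geq (2i - n - 1)d_i(T)$. It suffices to verify $(2\gamma(T) - i)d_i(T) \geq (2i - n - 1)d_i(T)$, and since $d_i(T) \geq 0$ this is implied by the purely arithmetic inequality $2\gamma(T) - i \geq 2i - n - 1$. Rearranging gives $n + 2\gamma(T) + 1 \geq 3i$, i.e. $i \leq \frac{n + 2\gamma(T)+1}{3}$. Because $i$ is an integer this is exactly $i \leq \left\lfloor\frac{n+2\gamma(T)+1}{3}\right\rfloor$, which matches the stated upper endpoint of the range. Thus for each such $i$ the hypothesis of Observation~\ref{obs:adi} giving $d_{i-1} \geq d_i$ fails, forcing $d_{i-1}(T) \leq d_i(T)$, and chaining these inequalities over all $i$ in the range yields the claimed chain.

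I expect the only genuinely delicate point to be the interaction between the floor function and the strict-versus-weak direction of Observation~\ref{obs:adi}; one must be careful that the ``if and only if'' in Observation~\ref{obs:adi} is being used in the correct direction (negating the non-increasing condition to obtain the non-decreasing one) and that the endpoint $i = \left\lfloor\frac{n+2\gamma(T)+1}{3}\right\rfloor$ still satisfies $3i \leq n + 2\gamma(T) + 1$, which it does since the floor is the largest such integer. Everything else is routine substitution, so the proof should be short once the lower bound on $a(T,i)$ is in hand.
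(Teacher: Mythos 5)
Your proposal is correct and follows essentially the same route as the paper's own proof: apply Lemma~\ref{lem_a(S)lowerbound} termwise to get $a(T,i) \geq (2\gamma(T)-i)d_i$, feed this into Observation~\ref{obs:adi} (in its non-decreasing form, which follows from the identity in Lemma~\ref{lem:aidi}), and solve $2\gamma(T)-i \geq 2i-n-1$ to obtain the range $i \leq \left\lfloor\frac{n+2\gamma(T)+1}{3}\right\rfloor$. Your closing caution about the direction of the ``if and only if'' is the only point where the paper is equally terse, so there is nothing to add.
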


\begin{proof}
For any dominating set $S$ it follows from Lemma \ref{lem_a(S)lowerbound} that $2\gamma(T) - |S| \leq a(S)$. Therefore 

$$(2\gamma(T) - i)d_i \leq a(T,i)$$

\noindent By Observation \ref{obs:adi}, $d_{i-1} \leq d_{i}$ if and only if $a(T,i) \geq (2i-n-1)d_i$.
Thus $d_{i-1} \leq d_i$ when $i \leq \frac{n+2\gamma(T)+1}{3}$.
As $i$ must be an integer, we obtain our result.
\end{proof}

\section{Non-increasing Segment of Coefficients}
\label{Sec_decrease}

\noindent In this section we will show 

$$ d_{\left\lceil\frac{n+2\Gamma(T)-2}{3} \right\rceil} \geq \cdots \geq d_{n-1} \geq d_{n},$$

\noindent were $T$ is a tree of order $n$. We will again investigate how the domination critical vertices $a(S)$ behaviour when adding vertices to $S$. However, now we seek the upperbound $a(S) \leq 2\Gamma(T)-|S|$. This upperbound has proven to be significantly harder to prove than the lowerbound in the previous section. The essence of the proof remains the same. If for some dominating set $S$ we have $a(S) < 2\Gamma(T)-|S|$, then we can find a minimal dominating set larger than $\Gamma(T)$. Constructing these larger minimal dominating sets requires algorithms which iterate the reconfiguration rules introduced in \cite{2024BeatonReConfig}. Although we find the algorithms interesting in their own right, they are certainly very tedious for the objective of this section.

We will begin by expanding our definition regarding domination critical vertices. First partition the vertices not in $S$ into the following two sets:

\vspace{-6mm}

\begin{align*}
N_1(S) &= \{v \in V-S : |N[v] \cap S| = 1 \}\\
N_2(S) &= \{v \in V-S : |N[v] \cap S| \geq 2 \}.
\end{align*}

\noindent Furthermore, recall the partition $a(S) = a_1(S) \cup a_2(S)$, where 

\vspace{-6mm}

\begin{align*}
a_1(S) &= \{v \in a(S) : N[v] \cap N_1(S) \neq \emptyset \}\\
a_2(S) &= \{v \in a(S) : N[v] \cap N_1(S) = \emptyset \}.
\end{align*}

\noindent Partitioning these two sets lends the following results.

\begin{lemma}\cite{2021Beaton}
\label{lem:a1n1}
 Let $G$ be a graph. For any dominating set $S \in D_G$, $|a_1(S)| \leq |N_1(S)|$.
\end{lemma}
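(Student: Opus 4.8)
The plan is to construct an explicit injection $f \colon a_1(S) \to N_1(S)$, which immediately yields $|a_1(S)| \leq |N_1(S)|$. The guiding observation is that every vertex $u \in N_1(S)$ is dominated by exactly one vertex of $S$, and that vertex is uniquely recoverable from $u$; so if each critical vertex in $a_1(S)$ is sent to a suitable adjacent vertex of $N_1(S)$, distinct critical vertices must receive distinct images.

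First I would unwind the definitions. Fix $v \in a_1(S)$. By the definition of $a_1(S)$, the set $N[v] \cap N_1(S)$ is nonempty, so I may choose some $u = f(v) \in N[v] \cap N_1(S)$. Since $N_1(S) \subseteq V - S$ while $v \in S$, the vertices $u$ and $v$ are distinct, hence $u$ is a genuine neighbour of $v$; in particular $v \in N[u] \cap S$.

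The key step is to pin down $N[u] \cap S$ exactly. Because $u \in N_1(S)$ we have $|N[u] \cap S| = 1$, and since $v \in N[u] \cap S$ this forces $N[u] \cap S = \{v\}$. In other words, the unique vertex of $S$ dominating $u$ is precisely $v$, determined by $u$ alone.

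Finally I would verify injectivity. Suppose $f(v_1) = f(v_2) = u$ for $v_1, v_2 \in a_1(S)$. Applying the previous step to each of them gives $\{v_1\} = N[u] \cap S = \{v_2\}$, so $v_1 = v_2$, and $f$ is injective. I do not expect a serious obstacle here; the only point requiring care is the bookkeeping guaranteeing that the chosen neighbour $u$ is dominated by $v$ and by nothing else in $S$, which is exactly the single-element condition defining $N_1(S)$. Note this argument is purely local and uses no tree structure, consistent with the lemma being stated for arbitrary graphs $G$.
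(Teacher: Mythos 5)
Your proof is correct: the injection $f\colon a_1(S)\to N_1(S)$ is well defined because $N[v]\cap N_1(S)\neq\emptyset$ by the definition of $a_1(S)$, and the single-neighbour condition $|N[u]\cap S|=1$ for $u\in N_1(S)$ forces $N[u]\cap S=\{v\}$, which gives injectivity regardless of which neighbour you chose. Note that the paper does not prove this lemma at all---it is quoted from the cited reference \cite{2021Beaton}---and your private-neighbour counting argument is exactly the standard one for this statement, so there is nothing further to reconcile.
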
 

\begin{observation}
\label{obs:a2ImpliesIso}
 Let $G$ be a graph. For any dominating set $S$ if $v \in a_2(S)$ then $N(v) \subseteq N_2(S)$ and hence has no neighbours in $S$.
\end{observation}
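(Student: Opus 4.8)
The plan is to unpack the definition of $a_2(S)$ and exploit criticality directly. Let $v \in a_2(S)$; by definition $v \in a(S)$ and $N[v] \cap N_1(S) = \emptyset$. Since $v$ is critical, $S - v$ fails to dominate $G$, so some vertex $u$ is no longer dominated once $v$ is removed. As $S$ itself dominates $G$, this $u$ must satisfy $N[u] \cap S = \{v\}$; that is, $v$ is the unique vertex of $S$ in the closed neighbourhood of $u$.

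The crux of the argument is to show that this witness $u$ must be $v$ itself. Suppose instead $u \neq v$. Then $u$ is adjacent to $v$ (since $v \in N[u]$ and $u \neq v$), and moreover $u \notin S$ (otherwise $u \in N[u] \cap S$ would force $u = v$). Hence $|N[u] \cap S| = 1$ places $u$ in $N_1(S)$, while at the same time $u \in N(v) \subseteq N[v]$. This gives $u \in N[v] \cap N_1(S)$, contradicting $N[v] \cap N_1(S) = \emptyset$. Therefore $u = v$, and consequently $N[v] \cap S = \{v\}$, so $v$ has no neighbour in $S$.

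It then remains to upgrade this to $N(v) \subseteq N_2(S)$. Fix any $w \in N(v)$. Since $v$ has no neighbour in $S$, we have $w \notin S$, so $w$ lies in the partition $N_1(S) \cup N_2(S)$ of $V - S$. Because $w \in N[v]$ and $N[v] \cap N_1(S) = \emptyset$, the possibility $w \in N_1(S)$ is ruled out, leaving $w \in N_2(S)$. As $w$ was arbitrary, $N(v) \subseteq N_2(S)$, and the ``no neighbours in $S$'' conclusion follows since $N_2(S) \subseteq V - S$.

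I expect no serious obstacle here: the entire statement collapses once one observes that the vertex witnessing the failure of $S - v$ to dominate cannot be a neighbour of $v$, since such a neighbour would lie in $N_1(S)$ and violate the defining condition of $a_2(S)$. The only point needing care is the bookkeeping that $u \notin S$ whenever $u \neq v$, which is immediate from $|N[u] \cap S| = 1$.
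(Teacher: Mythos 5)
Your proof is correct and follows essentially the same route as the paper's: both use criticality of $v$ to produce a witness vertex undominated by $S-v$, then use the defining condition $N[v]\cap N_1(S)=\emptyset$ together with the partition of $V-S$ into $N_1(S)\cup N_2(S)$ to force that witness to be $v$ itself, giving $N(v)\cap S=\emptyset$ and then $N(v)\subseteq N_2(S)$. The only difference is cosmetic: the paper first shows neighbours of $v$ outside $S$ lie in $N_2(S)$ and hence stay dominated, while you argue by contradiction that a witness $u\neq v$ would lie in $N_1(S)$; these are the same step read in opposite directions.
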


\begin{proof}
Let $v \in a_2(S)$.
As $v$ is critical then $S-v$ is not a dominating set.
As $S$ was a dominating set then any vertex not dominated by $S-v$ is in $N[v]$.
More specifically, either $v$ or some vertex in $N(v)-S$ is not dominated by $S-v$.
By the definition of $a_2(S)$, $N[v] \cap N_1(S) = \emptyset$.
Therefore every vertex in $N(v)-S$ must be in $N_2(S)$, and thus is still dominated by its other neighbour in $S$.
Thus, $v$ is not dominated by $S-v$ and hence it has no neighbours in $S-v$.
Therefore  $N(v) \subseteq N_2(S)$. \hfill\nopagebreak$\Box$\par\bigskip
\end{proof}

\noindent We now introduce an algorithm which will be used to reconfigure minimal dominating sets.

\begin{center}
\begin{algorithm}[H]
\label{alg:MakeMinimal}
Input a finite tree $T$ rooted at a given vertex $v$  \;
Input a dominating set $M_0$ of $T$ \;
Set $i=0$  \;
  \While{$M_i$ is not a minimal dominating set}
  {
   Choose a supported vertex $u_i \in M_i - a(M_i)$ of least depth\;
   Set $A_{i+1}$ to be all vertices in $a_1(M_i)$ which neighbour $u_i$\;
   Set $N_{i+1}$ to be all vertices in $N_1(M_i)$ which neighbour a vertex in $A_{i+1}$\;
   Set $M_{i+1} = (M_{i}-A_{i+1}) \cup N_{i+1}$\;
   Set $i=i+1$  \;
  }
Return $M_i$  \;
 \caption{Creates a Minimal Dominating set $M_i$ with $|M_i| \geq |M_0|$}
\end{algorithm}
\end{center}

Consider an example of Algorithm \ref{alg:MakeMinimal} illustrated in Figure \ref{fig:AlgExample} for a tree on 17 vertices rooted at the vertex $v$. 
The set $M_0$ is the vertices shaded black in Figure \ref{fig:AlgExample} $(a)$. 
For $i=0$, there is exactly one supported vertex $u_0 \in M_0$. 
The lone neighbour of $u_0$ in $M_0$ is also in $a_1(M_0)$ and forms $A_1$.
The one vertex in $A_1$ has two neighbours in $N_1(M_{0})$ (its middle and right child) which form $N_1$.
Let $M_1=(M_0-A_1) \cup N_1$ which is featured in Figure \ref{fig:AlgExample} $(b)$.
Now set $i=1$ and we continue as $M_1$ is not a minimal dominating set.
There is exactly one supported vertex $u_1 \in M_1$. 
It has two neighbours in $a_1(M_0)$ which together form $A_2$.
Each vertex in $A_2$ has one neighbour in $N_1(M_{1})$ which together form $N_2$.
Let $M_2=(M_1-A_2) \cup N_2$ which is featured in Figure \ref{fig:AlgExample} $(c)$.
$M_2$ is a minimal dominating set and thus Algorithm \ref{alg:MakeMinimal} returns $M_2$ which is larger than $M_0$.

\begin{figure}[!h]
\def\c{1}
\centering
\subfigure[$M_0$]{
\scalebox{\c}{
\begin{tikzpicture}
\begin{scope}[every node/.style={circle,thick,draw}]
    \node[shape=circle,draw=black,fill=white, label=above:$v$] (1) at (0,0) {};
    \node[shape=circle,draw=black,fill=black, label={[label distance=0.75mm]45:$u_0$}] (2) at (0.5,-0.5) {};
    \node[shape=circle,draw=black,fill=black] (3) at (1,-1) {};
    \node[shape=circle,draw=black,fill=white] (4) at (1.5,-1.5) {};
    \node[shape=circle,draw=black,fill=white] (5) at (2,-2) {};
    \node[shape=circle,draw=black,fill=black] (6) at (2.5,-2.5) {};
    \node[shape=circle,draw=black,fill=black] (7) at (3,-3) {};
    \node[shape=circle,draw=black,fill=white] (8) at (3.5,-3.5) {};
    \node[shape=circle,draw=black,fill=black] (9) at (2,-3) {};
    \node[shape=circle,draw=black,fill=white] (10) at (1.5,-3.5) {};
    
    \node[shape=circle,draw=black,fill=white] (11) at (0.5,-1.5) {};
    \node[shape=circle,draw=black,fill=black] (12) at (0,-2) {};
    \node[shape=circle,draw=black,fill=white] (13) at (-0.5,-2.5) {};

    \node[shape=circle,draw=black,fill=white] (14) at (1,-1.5) {};
    \node[shape=circle,draw=black,fill=white] (15) at (1,-2) {};
    \node[shape=circle,draw=black,fill=black] (16) at (1,-2.5) {};
      
    \node[shape=circle,draw=black,fill=black] (17) at (-0.5,-0.5) {};
\end{scope}

\begin{scope}
    \path [-] (1) edge node {} (2);
    \path [-] (2) edge node {} (3);
    \path [-] (3) edge node {} (4);
    \path [-] (4) edge node {} (5);
    \path [-] (5) edge node {} (6);
    \path [-] (6) edge node {} (7);
    \path [-] (7) edge node {} (8);
    
    \path [-] (6) edge node {} (9);
    \path [-] (9) edge node {} (10);

    \path [-] (3) edge node {} (11);
    \path [-] (11) edge node {} (12);
    \path [-] (12) edge node {} (13);
    
    \path [-] (3) edge node {} (14);
    \path [-] (14) edge node {} (15);
    \path [-] (15) edge node {} (16);
    
    \path [-] (1) edge node {} (17);
\end{scope}
\end{tikzpicture}}}
\qquad
\subfigure[$M_1$]{
\scalebox{\c}{
\begin{tikzpicture}
\begin{scope}[every node/.style={circle,thick,draw}]
    \node[shape=circle,draw=black,fill=white, label=above:$v$] (1) at (0,0) {};
    \node[shape=circle,draw=black,fill=black, label={[label distance=0.75mm]45:$u_0$}] (2) at (0.5,-0.5) {};
    \node[shape=circle,draw=black,fill=white] (3) at (1,-1) {};
    \node[shape=circle,draw=black,fill=black] (4) at (1.5,-1.5) {};
    \node[shape=circle,draw=black,fill=white] (5) at (2,-2) {};
    \node[shape=circle,draw=black,fill=black,label={[label distance=0.75mm]45:$u_1$}] (6) at (2.5,-2.5) {};
    \node[shape=circle,draw=black,fill=black] (7) at (3,-3) {};
    \node[shape=circle,draw=black,fill=white] (8) at (3.5,-3.5) {};
    \node[shape=circle,draw=black,fill=black] (9) at (2,-3) {};
    \node[shape=circle,draw=black,fill=white] (10) at (1.5,-3.5) {};
    
    \node[shape=circle,draw=black,fill=white] (11) at (0.5,-1.5) {};
    \node[shape=circle,draw=black,fill=black] (12) at (0,-2) {};
    \node[shape=circle,draw=black,fill=white] (13) at (-0.5,-2.5) {};
    
    \node[shape=circle,draw=black,fill=black] (14) at (1,-1.5) {};
    \node[shape=circle,draw=black,fill=white] (15) at (1,-2) {};
    \node[shape=circle,draw=black,fill=black] (16) at (1,-2.5) {};

    \node[shape=circle,draw=black,fill=black] (17) at (-0.5,-0.5) {};
\end{scope}

\begin{scope}
    \path [-] (1) edge node {} (2);
    \path [-] (2) edge node {} (3);
    \path [-] (3) edge node {} (4);
    \path [-] (4) edge node {} (5);
    \path [-] (5) edge node {} (6);
    \path [-] (6) edge node {} (7);
    \path [-] (7) edge node {} (8);
    
    \path [-] (6) edge node {} (9);
    \path [-] (9) edge node {} (10);

    \path [-] (3) edge node {} (11);
    \path [-] (11) edge node {} (12);
    \path [-] (12) edge node {} (13);
    
    \path [-] (3) edge node {} (14);
    \path [-] (14) edge node {} (15);
    \path [-] (15) edge node {} (16);
    
    \path [-] (1) edge node {} (17);
\end{scope}
\end{tikzpicture}}}
\qquad
\subfigure[$M_2$]{
\scalebox{\c}{
\begin{tikzpicture}
\begin{scope}[every node/.style={circle,thick,draw}]
    \node[shape=circle,draw=black,fill=white, label=above:$v$] (1) at (0,0) {};
    \node[shape=circle,draw=black,fill=black, label={[label distance=0.75mm]45:$u_0$}] (2) at (0.5,-0.5) {};
    \node[shape=circle,draw=black,fill=white] (3) at (1,-1) {};
    \node[shape=circle,draw=black,fill=black] (4) at (1.5,-1.5) {};
    \node[shape=circle,draw=black,fill=white] (5) at (2,-2) {};
    \node[shape=circle,draw=black,fill=black,label={[label distance=0.75mm]45:$u_1$}] (6) at (2.5,-2.5) {};
    \node[shape=circle,draw=black,fill=white] (7) at (3,-3) {};
    \node[shape=circle,draw=black,fill=black] (8) at (3.5,-3.5) {};
    \node[shape=circle,draw=black,fill=white] (9) at (2,-3) {};
    \node[shape=circle,draw=black,fill=black] (10) at (1.5,-3.5) {};
    
    \node[shape=circle,draw=black,fill=white] (11) at (0.5,-1.5) {};
    \node[shape=circle,draw=black,fill=black] (12) at (0,-2) {};
    \node[shape=circle,draw=black,fill=white] (13) at (-0.5,-2.5) {};

    \node[shape=circle,draw=black,fill=black] (14) at (1,-1.5) {};
    \node[shape=circle,draw=black,fill=white] (15) at (1,-2) {};
    \node[shape=circle,draw=black,fill=black] (16) at (1,-2.5) {};

    \node[shape=circle,draw=black,fill=black] (17) at (-0.5,-0.5) {};
\end{scope}

\begin{scope}
    \path [-] (1) edge node {} (2);
    \path [-] (2) edge node {} (3);
    \path [-] (3) edge node {} (4);
    \path [-] (4) edge node {} (5);
    \path [-] (5) edge node {} (6);
    \path [-] (6) edge node {} (7);
    \path [-] (7) edge node {} (8);
    
    \path [-] (6) edge node {} (9);
    \path [-] (9) edge node {} (10);

    \path [-] (3) edge node {} (11);
    \path [-] (11) edge node {} (12);
    \path [-] (12) edge node {} (13);
    
    \path [-] (3) edge node {} (14);
    \path [-] (14) edge node {} (15);
    \path [-] (15) edge node {} (16);
    
    \path [-] (1) edge node {} (17);
\end{scope}
\end{tikzpicture}}}

\caption{Example of Algorithm \ref{alg:MakeMinimal}}%
\label{fig:AlgExample}%
\end{figure}

Throughout the application of Algorithm \ref{alg:MakeMinimal}, we repeatedly remove vertices from $a_1(M_i)$ and replace then with all of their neighbours in $N_1(M_i)$. The addition of new vertices potentially creates new supported vertices at greater depth. For example, $u_1$ was critical in $M_0$, however, $u_1$ was supported in $M_1$. For this reason it is not clear that Algorithm \ref{alg:MakeMinimal} will produce a minimal dominating set. To show this we will need the following Lemma which gives some general properties which hold when adding and removing these particular vertices.

%
%

\begin{lemma}
\label{lem:move_a1_to_n1}
Let $S$ be a dominating set of $T$ rooted at vertex $v$ with $A \subseteq a_1(S)$ and $N=N(A) \cap  N_1(S)$. Set $S' = (S-A) \cup N$. If every vertex in $A$ has the same depth in $T$ then
\begin{itemize}
\item[$(i)$] $|S'| \geq |S|$
\item[$(ii)$] The vertices in $A$ along with every descendant of the vertices in $A$ are dominated in $S'$.

\end{itemize}
Additionally, if $S'$ is a dominating set and every vertex of $N$ is a child of some vertex in $A$ then 
\begin{itemize}

\item[$(iii)$] No vertex in $N$ is adjacent to any other vertex in $S'$, hence $N \subseteq a(S')$.
\item[$(iv)$] Any vertex $x \in a(S)$ which is now supported in $S'$ is a grandchild of some vertex in $N$ and does not have its parent in $S'$.
\end{itemize}
\end{lemma}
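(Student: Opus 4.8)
The plan is to prove the four parts in the stated order, using throughout one structural fact about $N$. Since $N \subseteq N_1(S)$, every $w \in N$ has exactly one neighbour in $S$, and since $w \in N(A)$ that neighbour must lie in $A$; this defines a map $\varphi\colon N \to A$ sending each $w$ to its unique $S$-neighbour. For $(i)$ I would note that $\varphi$ is surjective: each $a \in A \subseteq a_1(S)$ satisfies $N(a)\cap N_1(S)\neq\emptyset$, and any such neighbour lies in $N$ and is sent to $a$ by $\varphi$. Hence $|N|\ge|A|$, and since $N\cap S=\emptyset$ and $A\subseteq S$ we get $|S'| = |S|-|A|+|N| \ge |S|$. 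Note this part needs nothing about depth.

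For $(ii)$, let $d$ be the common depth of the vertices of $A$, and let $y$ be a descendant of some $a\in A$ (possibly $y=a$). The idea is to split on the depth of $y$. A vertex of $S$ is deleted only if it lies in $A$, i.e.\ only if it has depth exactly $d$, so any $y$ at depth $\ge d+2$ keeps all of its $S$-dominators (they have depth $\ge d+1$) and remains dominated. For $y=a$ itself, the surjectivity of $\varphi$ gives a neighbour of $a$ in $N\subseteq S'$. For $y$ a child of $a$ (depth $d+1$) I would argue by cases on $y$: if $y\in S$ then $y\in S'$ since $y\notin A$; if $y\in N_2(S)$ then $y$ has a neighbour in $S$ other than its parent $a$, which survives into $S'$; and if $y\in N_1(S)$ then its unique $S$-neighbour is $a$, so $y\in N(A)\cap N_1(S)=N\subseteq S'$. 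In every case $y$ is dominated in $S'$.

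For $(iii)$ I would use the new hypothesis that each $w\in N$ is a child of a vertex of $A$, so $N$ sits at depth $d+1$. In a tree the neighbours of $w$ are its parent, which lies in $A$ and has been removed, and its children at depth $d+2$; the latter are not in $S$ (as $w\in N_1(S)$ its only $S$-neighbour is its parent) and not in $N$ (wrong depth), hence not in $S'$. Thus $w$ has no neighbour in $S'$, and since $S'$ is assumed dominating, Observation~\ref{obs:IsoImpliesCrit} gives $w\in a(S')$, so $N\subseteq a(S')$.

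Part $(iv)$ is where I expect the real work. Take $x\in a(S)$ that is supported in $S'$; first note $x\notin A$ (else $x\notin S'$), so $x\in S'$. Since $x\in a(S)$, fix a private neighbour $r$ with $N[r]\cap S=\{x\}$. Because $x$ is supported in $S'$ this $r$ must be dominated by $S'-x$, and since $N[r]\cap(S-A)=\{x\}$ the only possibility is $N[r]\cap N\neq\emptyset$, so $r$ is adjacent to some $w\in N$. The main obstacle, and the part demanding care, is then a depth-chase that simultaneously pins down $x$ and rules out the degenerate case $r=x$: if $r=x$ then $x$ would be adjacent to $w\in N$, forcing $x$ to be either $w$'s parent (in $A$, contradicting $x\notin A$) or $w$'s child (not in $S$ since $w\in N_1(S)$, contradicting $x\in S$), so $r\neq x$ and $r\in N_1(S)$ with unique $S$-neighbour $x$. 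Now $r$ adjacent to $w$ (depth $d+1$) with $r\notin S$ forces $r$ to be a child of $w$ (depth $d+2$), and $x$ adjacent to $r$ with $x\in S$ forces $x$ to be a child of $r$ (depth $d+3$). Hence $x$ is a grandchild of $w\in N$, and its parent $r$, lying in $N_1(S)$ at depth $d+2$, is neither in $S-A$ nor in $N$, so $r\notin S'$, completing $(iv)$.
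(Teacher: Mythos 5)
Your proofs of $(i)$--$(iii)$ follow essentially the same lines as the paper's (the paper is terser about surjectivity in $(i)$ and organizes the descendant argument in $(ii)$ slightly differently, but the content is the same). Part $(iv)$ is where you genuinely diverge, and your route is shorter. The paper first argues that $x$ must lie within distance two of some vertex $x' \in N$, rules out adjacency to $N$, and then runs a three-way case analysis on whether $x$ is the grandparent of $x'$, shares a parent with $x'$, or is a grandchild of $x'$; the first two cases are eliminated by constructing a vertex $z \in N_1(S')$ privately dominated by $x$ in $S'$, so that $x \in a_1(S')$ contradicts $x$ being supported. You instead work directly with a private neighbour $r$ of $x$ in $S$: since $S'-x$ must dominate $r$ and $N[r]\cap(S-A)=\{x\}$, the vertex $r$ must be rescued by $N$, and then two parity-of-membership observations --- $r \notin S$ while parents of vertices of $N$ lie in $A \subseteq S$, and $x \in S$ while vertices of $N$ lie outside $S$ --- force $r$ to be a child of $w$ and $x$ to be a child of $r$. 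This collapses the paper's case analysis: the configurations the paper eliminates by contradiction are exactly those where the distance-two path from $x$ to $N$ passes through a vertex of $A$, and the path through a private neighbour $r \notin S$ can never do that. You also obtain ``the parent of $x$ is not in $S'$'' directly from $r \notin S$ and its depth, without invoking $(iii)$.

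One small patch is needed: from $N[r]\cap N \neq \emptyset$ you conclude that $r$ is adjacent to some $w \in N$, but that inference also allows $r \in N$ itself, a case your depth-chase does not cover since it needs an actual neighbour $w$ of $r$ in $N$. The case is impossible --- $r \in N$ would require $r \in N(A)$, i.e.\ a neighbour of $r$ in $A \subseteq S$, while $N[r]\cap S = \{x\}$ and $x \notin A$ --- but a sentence saying so should be added.
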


\begin{proof}
Let $S$ be a dominating set of $T$ rooted at vertex $v$ with $A \subseteq a_1(S)$ and $N=N(A) \cap  N_1(S)$.
Suppose every vertex in $A$ has the same depth in $T$ and let $S' = (S-A) \cup N$.

$(i)$ By definition of $a_1(S)$ and $N_1(S)$, each vertex in $N_1(S)$ has exactly one neighbour in $a_1(S)$.
Therefore each vertex in $N$ has exactly one neighbour in $A$ so $|A| \leq |N|$.
Moreover  $|S'| \geq |S|$.

$(ii)$ Recall that $N=N(A) \cap  N_1(S)$.
Therefore every vertex in $A$ has a neighbour in $N$.
Thus the vertices in $A$ are dominated in $S'$.
Now suppose some descendant $u$ of $A$ is not dominated in $S'$.
Since $S$ was a dominating set, then $u$ was previously dominated by $S$.
More specifically, $u$ was dominated by the vertices in $A$ and $N[u] \cap S \subseteq A$
As $u$ is a descendant of $A$, then $u$ must be a child of some vertex in $A$.
Thus $v \in N_1(S)$ and hence $u \in N$ which contradicts the fact that $u$ is not dominated in $S'$.

For $(iii)$ and $(iv)$ suppose that $S'$ is a dominating set and every vertex of $N$ is a child of some vertex in $A$.
Note that every vertex in $A$ has the same depth so every vertex $N$ must also have the same depth.

$(iii)$ Recall that $N \subseteq N_1(S)$.
Therefore each vertex in $N$ had exactly one neighbour in $S$ which was in $A$.
As $A$ was removed in $S'$ then $N$ has no neighbours in $S'$ other than possibly those in $N$.
However, every vertex in $N$ has the same depth.
So no vertex in $N$ is adjacent to any other vertex in $S'$.
Thus by Observation \ref{obs:IsoImpliesCrit}, $N \subseteq a(S')$.

$(iv)$ let $x \in a(S)$ which is now supported in $S'$.
First note, $x \in S$ and $x \in S'$ so $x \notin A$ and $x \notin N$.
The addition of $N$ caused $x$ to no longer be domination critical. 
Therefore $x$ must be supported by some vertex in $N$ and hence distant at most two away from some vertex in $N$.
Each vertex in $N$ previously had exactly one neighbour in $S$, which was in $A$ and hence removed in $S'$. 
Thus $x$ adjacent to any vertex in $N$.
Moreover, $x$ must be distance exactly two from a vertex in $N$.
Let $x'$ be the vertex in $N$ which is distance two from $x$.
Moreover let $y'$ be the parent of $x'$ which is necessarily in $A$.
As $T$ is a tree then $x$ either has depth two less than $x'$, the same depth as $x'$, or $x$ has depth two greater than $x'$. 

Suppose $x$ has depth two less than $x'$ and $x$ then $x$ is the grandparent of $x'$ and hence the parent of $y'$.
Moreover, $x$ has depth two less than every vertex in $N$.
As $x \in a(S)$ and adjacent to $y' \in S$ then it follows from the contrapositive of Observation \ref{obs:a2ImpliesIso} that $x \in a_1(S)$.
Therefore there exists a vertex $z \in N_1(S)$ whose lone neighbour in $S$ was $x$.
We will now show that $z \in N_1(S')$ which implies $x \in a_1(S')$.
Suppose not, that is suppose $z \notin N_1(S')$.
Note that $z \notin N$ as $x \notin A$ so $z \notin S'$.
Therefore $z \in N_2(S')$ and now has at least two neighbours in $S'$.
One neighbour must be $x$, and then other(s) must be in $N$.
Therefore $x$ is the parent of $z$ and $z$ is the parent of at least one vertex in $N$.
Each parent of a vertex in $N$ is necessarily in $A$.
However, $A \subseteq S$ and $z \notin S$ so $z \notin A$.
This forms a contradiction and so $z \in N_1(S')$.
As $x \in S'$ then $x$ is the lone neighbour of $z$ is $S'$ and hence $x \in a_1(S')$.
Thus $x \notin S'-a(S')$ and hence is not supported in $S'$.
Therefore the case where $x$ has depth two less than $x'$ is impossible.

Now suppose, $x$ has the same depth as $x'$. 
As $T$ is a tree and $x$ and $x'$ are distance two from each other, then they must share the same parent $y'$.
Recall that $y' \in A$ and hence was previously in $S$.
Therefore as $x \in a(S)$ and it had a neighbour (its parent $y'$) in $S$, then it follows from Observation \ref{obs:a2ImpliesIso} that $x \in a_1(S)$.
Therefore there exists a vertex $z \in N_1(S)$ whose lone neighbour in $S$ was $x$.
Moreover, $z$ must be a child of $x$ and hence at depth one lower than any vertex in $N$.
Thus $z \notin N$ nor is $z$ is adjacent to any vertex in $N$ as its parent is $x$.
Therefore $z \in N_1(S')$ and its lone neighbour in $S'$ is still $x$.
Hence $x \in a_1(S')$, and it follows that $x$ can not have depth equal to $x'$.
Thus $x$ must be have depth two greater than $x'$ and hence $x$ is a grandchild of some vertex in $N$.
From $(iii)$ we have that a child of any vertex in $N$ is not in $S'$.
Thus the parent of $x$, which is a child to some vertex in $N$, is not in $S'$. \hfill\nopagebreak$\Box$\par\bigskip
\end{proof}

The results from Lemma \ref{lem:move_a1_to_n1} will be useful in proving which initial dominating sets $M_0$ allow Algorithm \ref{alg:MakeMinimal} to terminate. Additionally, we will use Lemma \ref{lem:move_a1_to_n1} later in this paper to construct the initial dominating sets $M_0$ from other minimal dominating sets $M$.

\begin{theorem}
\label{thm:algterm}

Let $T$ be a finite tree rooted at a vertex $v$ and $M_0$ a dominating set of $T$. If 

\begin{itemize}
\item[$(a)$] Every parent of a supported vertex is not in $M_0$, and
\item[$(b)$] No supported vertex is a descendant of another supported vertex.
\end{itemize}

\noindent Then Algorithm \ref{alg:MakeMinimal} outputs a minimal dominating set $M_i$ with $|M_i| \geq |M_0|$.
\end{theorem}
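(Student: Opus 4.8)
The plan is to treat conditions $(a)$ and $(b)$ as loop invariants and to argue by induction on the iteration counter $i$ that every set $M_i$ produced by Algorithm~\ref{alg:MakeMinimal} is a dominating set satisfying $(a)$ and $(b)$ with $|M_i|\ge|M_0|$; a separate termination argument then guarantees that the loop exits, at which point its exit condition forces the returned $M_i$ to be minimal. The engine of each inductive step is Lemma~\ref{lem:move_a1_to_n1} applied with $S=M_i$, $A=A_{i+1}$ and $N=N_{i+1}$, so the first task is to verify its hypotheses hold at every iteration.

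The crucial structural claim is that $A_{i+1}$ is exactly the set of all children of $u_i$ that lie in $M_i$. Indeed, by $(a)$ the parent of the supported vertex $u_i$ is not in $M_i$, so every neighbour of $u_i$ in $M_i$ is a child; no such child can be supported, since by $(b)$ a supported child of the supported vertex $u_i$ is forbidden, and no such child can lie in $a_2(M_i)$, since by Observation~\ref{obs:a2ImpliesIso} a vertex of $a_2(M_i)$ has no neighbour in $M_i$ whereas $u_i\in M_i$ is its parent. Hence every child of $u_i$ in $M_i$ lies in $a_1(M_i)$ and so belongs to $A_{i+1}$, and in particular all of $A_{i+1}$ shares the depth $\operatorname{depth}(u_i)+1$, so Lemma~\ref{lem:move_a1_to_n1}$(i)$ yields $|M_{i+1}|\ge|M_i|$. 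A short check that $M_{i+1}$ is still dominating---any vertex whose $M_i$-dominators all lie in $A_{i+1}$ is either a member of $A_{i+1}$, redominated by its $N_1$-child as in Lemma~\ref{lem:move_a1_to_n1}$(ii)$, or a grandchild of $u_i$ that is itself placed into $N_{i+1}$---then lets me invoke parts $(iii)$ and $(iv)$ as well. Note also that, since all children of $u_i$ in $M_i$ are removed while no neighbour of $u_i$ is added, $u_i$ has no neighbour in $M_{i+1}$ and thus becomes critical by Observation~\ref{obs:IsoImpliesCrit}.

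To preserve the invariants I would describe the supported vertices of $M_{i+1}$. By Lemma~\ref{lem:move_a1_to_n1}$(iv)$ every vertex critical in $M_i$ but supported in $M_{i+1}$ is a grandchild of some vertex of $N_{i+1}$ with its parent outside $M_{i+1}$, and since $N_{i+1}$ shares a common depth these new supported vertices all lie at depth $\operatorname{depth}(u_i)+4$. Any old supported vertex that survives also keeps its parent outside $M_{i+1}$: it had none in $M_i$ by $(a)$, and a parent added via $N_{i+1}$ would contradict Lemma~\ref{lem:move_a1_to_n1}$(iii)$. This gives $(a)$ for $M_{i+1}$. For $(b)$, the new supported vertices form an antichain because they share a depth; each surviving old supported vertex is incomparable to $u_i$ in the ancestor order (as $u_i$ has least depth among supported vertices of $M_i$), hence incomparable to every descendant of $u_i$ and in particular to each new supported vertex; and the old supported vertices retain their mutual antichain property, so $(b)$ holds for $M_{i+1}$.

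Finally I would establish termination with a depth potential. Writing $D$ for the depth of $T$ and $n=|V(T)|$, set $\Phi(M)=\sum_{w} B^{\,D-\operatorname{depth}(w)}$, summed over supported vertices $w$ of $M$, with base $B=n+1$. Each iteration deletes the supported vertex $u_i$ at depth $\operatorname{depth}(u_i)$ and creates only supported vertices of depth at least $\operatorname{depth}(u_i)+1$, while possibly turning further vertices critical; since fewer than $B$ vertices are created, each contributing at most $B^{\,D-\operatorname{depth}(u_i)-1}$, the integer $\Phi$ strictly decreases, so the loop halts. I expect the main obstacle to be precisely this progress argument: one must simultaneously guarantee that $u_i$ leaves the supported set and that nothing enters it at a shallow depth, which is exactly what the identification of $A_{i+1}$ with all children of $u_i$ and part $(iv)$ of Lemma~\ref{lem:move_a1_to_n1} supply. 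Once termination is secured, the while-loop exit condition gives minimality of the returned set, and the chain $|M_i|\ge\cdots\ge|M_0|$ completes the proof.
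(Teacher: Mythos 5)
Your proof is correct and takes essentially the same route as the paper's: you identify $A_{i+1}$ as exactly the children of $u_i$ lying in $M_i$ (via condition $(a)$, condition $(b)$, and Observation~\ref{obs:a2ImpliesIso}), invoke Lemma~\ref{lem:move_a1_to_n1}$(i)$--$(iv)$ to show $M_{i+1}$ is a dominating set of no smaller size whose only new supported vertices are grandchildren of $N_{i+1}$ at depth $\mathrm{depth}(u_i)+4$ with parents outside $M_{i+1}$, and conclude that both invariants persist. The only notable difference is that you formalize termination with an explicit potential function, whereas the paper argues informally that newly created supported vertices lie strictly deeper and the tree is finite; your version is a welcome tightening of that step, not a different approach.
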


\begin{proof}
Let $T$ be a finite tree rooted at a vertex $v$ and $M_0$ a dominating set which satisfies conditions $(a)$ and $(b)$.
If $M_0$ is a minimal dominating set then the algorithm terminates.
So suppose $M_0$ is not a minimal dominating set.
Let $u_0 \in M_0 - a(M_0)$ be the supported vertex of least depth. 
Now set $A_1$ to be the set of all vertices in $M_0$ which neighbour $u_0$. 
We define $A_1$ this way so that the removal of $A_1$ from $M_0$ makes $u_0$ a domination critical vertex.
We will now show $A_1$ is also all vertices in $a_1(M_0)$ which neighbour $u_0$.
By the contrapositive of Observation \ref{obs:IsoImpliesCrit}, as $u_0 \in M_0- a(M_0)$ then $u_0$ must have a neighbour in $M_0$.
More specifically $A_1$ is not empty. 
By condition $(a)$, the parent of $u_0$ is not in $M_0$
Therefore $A_1$ only contains children of $u_0$.
Condition $(b)$ implies that every descendant of $u_0$ is in $a(M_0)$.
Therefore $A_{1} \subseteq a(M_1)$ and each vertex in $A_{1}$ is adjacent to $u_0$.
Thus by the contrapositive of Observation \ref{obs:a2ImpliesIso}, each vertex in $A_{1}$ is in $a_1(M_0)$.
Thus $A_{1}$ is a non-empty set containing all vertices in $a_1(M_k)$ which neighbour $u_0$.
Now set $N_{1}$ to be all vertices in $N_1(M_0)$ which neighbour a vertex in $A_{1}$.
That is let $N_1=N(A_1) \cap N_1(M_0)$.
Note that each vertex in $A_1$ has $u_0$ as a parent.
Thus all vertices in $A_1$ have the same depth in $T$.
Moreover, each vertex in $N_1$ is a child of some vertex in $A_1$.
Now let $M_1=(M_0-A_1) \cup N_1$ and note that all conditions of Lemma \ref{lem:move_a1_to_n1} have been satisfied.
By Lemma \ref{lem:move_a1_to_n1} $(i)$, $|M_1| \geq |M_0|$.
By Lemma \ref{lem:move_a1_to_n1} $(ii)$, every vertex in $A_1$ and each of the descendants of $A_1$ are dominated in $M_1$.
The only other vertex which may not be dominated in $M_1$ is $u_0$.
Thus as $u_0 \in M_1$ then $M_1$ is a dominating set.

We will now show that condition $(a)$ holds for $M_1$.
Let $x \in M_1-a(M_1)$ be a supported vertex in $M_1$.
Recall that $A_1$ is set of all neighbours, thus $u_0$ has no neighbours in $M_1$.
It follows from Observation \ref{obs:IsoImpliesCrit} that $u_0 \in a(M_1)$.
In particular $x \neq u_0$.
If $x$ is not a descendant of $u_0$, then $x$ was previously supported in $M_0$ and unaffected by the addition of $A_1$ and removal of $N_1$.
That is the parent of $x$ is still not in $M_1$.
So suppose $x$ is a descendant of $u_0$. 
By condition $(b)$, no descendants of $u_0$ were supported in $M_0$.
Thus $x$ was not supported in $M_0$ and is now supported in $M_1$.
Therefore either $x \notin M_0$ or $x \in a(M_0)$.
However the only vertices in $M_1$ which were not in $M_0$ are in $N_1$.
By Lemma \ref{lem:move_a1_to_n1} $(iii)$, no vertex in $N_1$ is adjacent to any other vertex in $M_1$.
Therefore by Observation \ref{obs:IsoImpliesCrit} $N_1 \subseteq a(M_1)$ and hence $x \notin N_1$.
Thus $x \in M_0$ and so $x \in a(M_0)$.
More specifically $x \in a(M_0)$ and is now supported in $M_1$.
By Lemma \ref{lem:move_a1_to_n1} $(iv)$, $x$ is a grandchild of some vertex in $N_1$ and does not have its parent in $M_1$.
Therefore no parent of a supported vertex is in $M_1$.

We now show condition $(b)$ holds for $M_1$.
Let $x,y \in M_1-a(M_1)$ be a supported vertices in $M_1$.
In the previous paragraph we showed that each supported vertex in $M_1$ is either not a descendant of $u_0$ or is a grandchild of some vertex in $N$.
If both $x$ and $y$ are not descendants of $u_0$, then they were both supported in $M_0$.
Therefore as condition $(b)$ held for $M_0$, then $x$ and $y$ are not descendants of each other.
If both $x$ and $y$ are grandchildren of vertices in $N_1$ then $x$ and $y$ have the same depth.
Thus $x$ and $y$ are not descendants of each other.
Lastly, without loss of generality, suppose $x$ is a grandchild or some vertex in $N_1$ and $y$ is not a descendant of $u_0$.
Then the only way for $x$ and $y$ to be descendants is if $u_0$ was a descendant of $y$.
However $y$ was supported in $M_0$ and $u_0$ was the supported vertex of least depth in $M_0$.
Therefore we have a contradiction and $x$ and $y$ can not be descendants.

Now, if $M_1$ is a minimal dominating set then the algorithm terminates. 
Otherwise we search for a supported vertex $u_1\in M_1-a(M_1)$ of least depth.
We have established that $u_1 \neq u_0$ and $u_1$ at least at the same depth as $u_0$.
Moreover, and supported vertices created by further iterations of the algorithm will have depth strictly lower than $u_0$.
The above arguments will hold for any further iterations.
Thus as the tree is finite, there will eventually by no supported vertices.
Hence the algorithm will terminate with a minimal dominating set which at least as large as $M_0$. 
\end{proof}

We are now ready to reconfigure some minimal dominating sets! It has been established that for any minimal dominating set $a(M)=M$. Thus if $M$ is a minimal dominating set then at least one of $a_1(M)$ or $a_2(M)$ is non-empty. We will give two operations which can form a larger minimal dominating set. One operation reconfigures vertices from $a_1(M)$ and the other operation reconfigures vertices form $a_2(M)$.

\begin{theorem}
\label{thm:a1_to_n1}
Let $M$ be a minimal dominating set  of a tree $T$ with $v \in a_1(M)$. Then Algorithm \ref{alg:MakeMinimal} will will output minimal dominating set $M_i$, with $|M_i| \geq |M_0|\geq |M|$, for inputs:

\begin{itemize}
\item[•] $T$ rooted at $v$, and
\item[•] $M_0=(M-v) \cup N$,
\end{itemize}

\noindent where $N=N[v] \cap N_1(S)$.
\end{theorem}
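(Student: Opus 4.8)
The plan is to recognize this statement as a direct specialization of Lemma \ref{lem:move_a1_to_n1} followed by Theorem \ref{thm:algterm}, where the single reconfigured vertex is $v$ itself. Concretely, I would set $A = \{v\}$ and $N = N(A) \cap N_1(M) = N[v] \cap N_1(M)$ (since $v \in M$ forces $v \notin N_1(M)$, this is exactly the set of neighbours of $v$ that are dominated only by $v$). Because $v \in a_1(M)$, the set $N$ is non-empty, and because $A$ is a single vertex the hypothesis of Lemma \ref{lem:move_a1_to_n1} that all vertices of $A$ share a common depth is trivially satisfied. The key device is rooting $T$ at $v$: every vertex of $T$ is then either $v$ or a descendant of $v$.

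First I would establish that $M_0 = (M-v) \cup N$ is a dominating set. By Lemma \ref{lem:move_a1_to_n1}$(ii)$, $v$ together with all of its descendants is dominated in $M_0$; since $T$ is rooted at $v$ this accounts for every vertex, so $M_0 \in \mathcal{D}(T)$. Lemma \ref{lem:move_a1_to_n1}$(i)$ simultaneously gives $|M_0| \geq |M|$. Since $M_0$ is now known to be dominating and every vertex of $N$ is a child of $v$ (as $v$ is the root and has no parent), parts $(iii)$ and $(iv)$ of Lemma \ref{lem:move_a1_to_n1} also apply.

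Next I would verify that $M_0$ satisfies the two hypotheses of Theorem \ref{thm:algterm}. Since $M$ is minimal we have $a(M) = M$, so $M$ has no supported vertices and any supported vertex of $M_0$ must be newly created. The candidates lie in $(M-v) \cup N$: vertices of $N$ are excluded because Lemma \ref{lem:move_a1_to_n1}$(iii)$ gives $N \subseteq a(M_0)$, so every supported vertex $x$ of $M_0$ lies in $M - v \subseteq a(M)$. By Lemma \ref{lem:move_a1_to_n1}$(iv)$, such an $x$ is a grandchild of some vertex of $N$ and does not have its parent in $M_0$, which is precisely condition $(a)$. For condition $(b)$, observe that every vertex of $N$ is a child of the root $v$ and hence has depth $1$, so every supported vertex of $M_0$, being a grandchild of a vertex of $N$, has depth exactly $3$; vertices of equal depth cannot be descendants of one another, so $(b)$ holds.

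Finally, applying Theorem \ref{thm:algterm} to $M_0$ shows that Algorithm \ref{alg:MakeMinimal} terminates with a minimal dominating set $M_i$ satisfying $|M_i| \geq |M_0|$, and chaining the inequalities yields $|M_i| \geq |M_0| \geq |M|$. The only delicate point is the bookkeeping in the previous paragraph: one must be certain that Lemma \ref{lem:move_a1_to_n1}$(iv)$ truly captures \emph{every} supported vertex of $M_0$, which relies crucially on $M$ being minimal so that there are no pre-existing supported vertices to account for separately. I expect this verification to be the main (though modest) obstacle, with the remainder being a clean instantiation of the two cited results at $A = \{v\}$.
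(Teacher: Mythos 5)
Your proposal is correct and follows essentially the same route as the paper's own proof: both instantiate Lemma \ref{lem:move_a1_to_n1} with $A=\{v\}$ and $N=N[v]\cap N_1(M)$, use part $(ii)$ together with rooting at $v$ to get that $M_0$ is dominating, use minimality of $M$ plus parts $(iii)$ and $(iv)$ to verify conditions $(a)$ and $(b)$ of Theorem \ref{thm:algterm}, and then chain $|M_i|\geq|M_0|\geq|M|$. Your observation that all supported vertices of $M_0$ sit at depth exactly $3$ is just a sharper phrasing of the paper's equal-depth argument for condition $(b)$.
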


\begin{proof}
To show Algorithm \ref{alg:MakeMinimal} terminates, it is sufficient to show that conditions $(a)$ and $(b)$ of Theorem \ref{thm:algterm} are satisfied.
First note that $A$ only contains one vertex $v$, so every vertex in $A$ has the same depth in $T$.
Thus $M_0$ is obtained by applying the operation from Lemma \ref{lem:move_a1_to_n1} on $M$ with $A = \{v\}$ and $N=N[v] \cap N_1(M)$.
Hence by Lemma \ref{lem:move_a1_to_n1}$(i)$, we have $|M_0| \geq |M|$.
By Lemma \ref{lem:move_a1_to_n1}$(ii)$, vertex $v$ and all of it descendants are dominated in $M_0$.
As $T$ is rooted at $v$ then $M'$ is a dominating set.
Moreover, every vertex in $N$ is a child of $v$ so properties $(iii)$ and $(iv)$ of Lemma \ref{lem:move_a1_to_n1} apply.
As $M$ was a minimal dominating set, then every vertex in $M_0$ was in $a(M)$ except for the vertices of $N$.
By Lemma \ref{lem:move_a1_to_n1}$(iii)$, we have that $N \subseteq a(M_0)$.
Therefore each supported vertex in $M_0$ was previously in $a(M)$.
By Lemma \ref{lem:move_a1_to_n1} $(iv)$, any vertex in $a(M)$ which is now supported in $M_0$ is a grandchild of some vertex in $N$ and does not have its parent in $M_0$.
Therefore condition $(a)$ of Theorem \ref{thm:algterm} is satisfied.
Moreover, each vertex in $N$ is a child of some vertex in $A$, thus they are each at the same depth in $T$.
Thus each supported vertex in $M_0$ is grandchild of $N$ and hence all at the same depth in $T$.
Therefore no supported vertex is a descendant of another supported vertex in $M_0$.
So condition $(b)$ of Theorem \ref{thm:algterm} is also satisfied.
Therefore by Theorem \ref{thm:algterm}, Algorithm \ref{alg:MakeMinimal} will terminate with a minimal dominating set $M_i$, where $|M_i| \geq |M_0| \geq |M|$.
\end{proof}

Recall that for any vertex $v \in a_2(S)$, its neighbourhood is contained in $N_2(S)$. The following algorthim will allow us to reconfigure vertices in $a_2(S)$ by considering a subset $X \subseteq N_2(M)$.

\begin{center}
\begin{algorithm}[H]
\label{alg:a2_to_n2_X}
Input a minimal dominating set $M$ of a tree $T$\;
Input a subset $X \subseteq N_2(M)$ \;
Set $A=N(X) \cap a(M)$ \;
Set $A_2 = N(X) \cap a_2(M)$  \;
Set $N=N(A) \cap N_1(M)$  \;
Set $M'=(M\cup X-A) \cup N$  \;
  \For{ $x \in X$}
  {
   Root $T$ at $x$ \;
   Set $T_x$ to be the subtree of $T$ obtained by removing the $A_2$ and their respective descendants\;
   Set $M_x'=M' \cap V(T_x)$ \;
   Set $M_x''$ to the output of Algorithm \ref{alg:MakeMinimal} with inputs $T_x$ rooted at $x$ and $M_x'$  \;
   Set $M'=(M'-M_x') \cup M_x''$  \;
   
  }
Return $M'$  \;
 \caption{An algorithm for a subset $X \subseteq N_2(M)$}
\end{algorithm}
\end{center}

For an example of Algorithm \ref{alg:a2_to_n2_X}, consider $M$ in Figure \ref{fig:Alg2Example} $(a)$.
In this case we let $X=\{x_1, x_2, x_3\}$ so $A=\{a_{11}, a_{12}, a_{13}, a_{21}, a_{22}\}$ and $A_2=\{a_{21}, a_{22}\}$. 
Moreover $N=N(A) \cap N_1(M)$  so $N=\{n_1, n_2, n_3\}$. 
In Figure \ref{fig:Alg2Example} $(b)$, we let $M'=(M\cup X-A) \cup N$. 
For each $x \in X$ we have labelled $T_x$.
Note that only $T_{x_1}$ does not induce a minimal dominating set.
Therefore we apply Algorithm \ref{alg:MakeMinimal} to $T_{x_1}$.
$T_{x_1}$ has exactly one supported vertex $u_0$ which is label in Figure \ref{fig:Alg2Example} $(c)$.
The vertex $u_0$ has exactly one neighbour in $a_1(M_{x_1}')$.
That neighbour is removed and its lone neighbour in $N_1(M_{x_1}')$ is added to form $M_{x_1}''$.
Algorithm \ref{alg:a2_to_n2_X} terminates with the minimal dominating set which appears in Figure \ref{fig:Alg2Example} $(c)$.

\begin{figure}[!h]
\def\c{1}
\centering
\subfigure[$M$]{
\scalebox{\c}{
\begin{tikzpicture}
\begin{scope}[every node/.style={circle,thick,draw}]
    \node[shape=circle,draw=black,fill=white, label=above:$x_1$] (1) at (0,0) {};
    \node[shape=circle,draw=black,fill=black, label=above:$a_{21}$] (2) at (0.75,0.25) {};
    \node[shape=circle,draw=black,fill=white, label=above:$x_2$] (3) at (1.5,0) {};
    \node[shape=circle,draw=black,fill=black, label=above:$a_{22}$] (4) at (2.25,0.25) {};
    \node[shape=circle,draw=black,fill=white, label=above:$x_3$] (5) at (3,0) {};

    \node[shape=circle,draw=black,fill=black, label=right:$a_{13}$] (6) at (3,-0.5) {};
    \node[shape=circle,draw=black,fill=white, label=right:$n_{13}$] (7) at (3.25,-1) {};
    \node[shape=circle,draw=black,fill=white] (9) at (2.75,-1) {};
    \node[shape=circle,draw=black,fill=black] (10) at (2.75,-1.5) {};

    \node[shape=circle,draw=black,fill=white] (11) at (1.75,-0.5) {};
    \node[shape=circle,draw=black,fill=black] (12) at (1.75,-1) {};
    \node[shape=circle,draw=black,fill=white] (13) at (1.75,-1.5) {};
    
    \node[shape=circle,draw=black,fill=black, label=left:$a_{12}$] (14) at (1.25,-0.5) {};
    \node[shape=circle,draw=black,fill=white, label=left:$n_{12}$] (15) at (1.25,-1) {};
    
    \node[shape=circle,draw=black,fill=black, label=left:$a_{11}$] (17) at (0,-0.5) {};
    \node[shape=circle,draw=black,fill=white, label=left:$n_{11}$] (18) at (0,-1) {};
    \node[shape=circle,draw=black,fill=white] (19) at (0,-1.5) {};
    \node[shape=circle,draw=black,fill=black] (20) at (0,-2) {};
    \node[shape=circle,draw=black,fill=black] (21) at (0,-2.5) {};
    \node[shape=circle,draw=black,fill=white] (22) at (0,-3) {};
    
    \node[shape=circle,draw=black,fill=white] (23) at (2.25,-0.5) {};
    \node[shape=circle,draw=black,fill=black] (24) at (2.25,-1) {};
    \node[shape=circle,draw=black,fill=white] (25) at (2.25,-1.5) {};

\end{scope}

\begin{scope}
    \path [-] (1) edge node {} (2);
    \path [-] (2) edge node {} (3);
    \path [-] (3) edge node {} (4);
    \path [-] (4) edge node {} (5);
    \path [-] (5) edge node {} (6);
    \path [-] (6) edge node {} (7);
    
    \path [-] (6) edge node {} (9);
    \path [-] (9) edge node {} (10);

    \path [-] (3) edge node {} (11);
    \path [-] (11) edge node {} (12);
    \path [-] (12) edge node {} (13);
    
    \path [-] (3) edge node {} (14);
    \path [-] (14) edge node {} (15);
    
    \path [-] (1) edge node {} (17);
    \path [-] (17) edge node {} (18);
    \path [-] (18) edge node {} (19);
    \path [-] (19) edge node {} (20);
    \path [-] (20) edge node {} (21);
    \path [-] (21) edge node {} (22);
    
    \path [-] (4) edge node {} (23);
    \path [-] (23) edge node {} (24);
    \path [-] (24) edge node {} (25);
\end{scope}
\end{tikzpicture}}}
\qquad
\subfigure[$M'$]{
\scalebox{\c}{
\begin{tikzpicture}

\draw (0,-1.2) ellipse (0.6cm and 2.25cm);
\draw (1.5,-0.6) ellipse (0.55cm and 1.7cm);
\draw (3,-0.6) ellipse (0.55cm and 1.7cm);

\begin{scope}[every node/.style={circle,thick,draw}]
    \node[shape=circle,draw=black,fill=black, label=above:$T_{x_1}$] (1) at (0,0) {};
    \node[shape=circle,draw=black,fill=white] (2) at (0.75,0.25) {};
    \node[shape=circle,draw=black,fill=black, label=above:$T_{x_2}$] (3) at (1.5,0) {};
    \node[shape=circle,draw=black,fill=white] (4) at (2.25,0.25) {};
    \node[shape=circle,draw=black,fill=black, label=above:$T_{x_3}$] (5) at (3,0) {};
    
    \node[shape=circle,draw=black,fill=white] (6) at (3,-0.5) {};
    \node[shape=circle,draw=black,fill=black] (7) at (3.25,-1) {};
    \node[shape=circle,draw=black,fill=white] (9) at (2.75,-1) {};
    \node[shape=circle,draw=black,fill=black] (10) at (2.75,-1.5) {};

    \node[shape=circle,draw=black,fill=white] (11) at (1.75,-0.5) {};
    \node[shape=circle,draw=black,fill=black] (12) at (1.75,-1) {};
    \node[shape=circle,draw=black,fill=white] (13) at (1.75,-1.5) {};
    
    \node[shape=circle,draw=black,fill=white] (14) at (1.25,-0.5) {};
    \node[shape=circle,draw=black,fill=black] (15) at (1.25,-1) {};
    
    \node[shape=circle,draw=black,fill=white] (17) at (0,-0.5) {};
    \node[shape=circle,draw=black,fill=black] (18) at (0,-1) {};
    \node[shape=circle,draw=black,fill=white] (19) at (0,-1.5) {};
    \node[shape=circle,draw=black,fill=black] (20) at (0,-2) {};
    \node[shape=circle,draw=black,fill=black] (21) at (0,-2.5) {};
    \node[shape=circle,draw=black,fill=white] (22) at (0,-3) {};
    
    \node[shape=circle,draw=black,fill=white] (23) at (2.25,-0.5) {};
    \node[shape=circle,draw=black,fill=black] (24) at (2.25,-1) {};
    \node[shape=circle,draw=black,fill=white] (25) at (2.25,-1.5) {};
\end{scope}

\begin{scope}
    \path [-] (1) edge node {} (2);
    \path [-] (2) edge node {} (3);
    \path [-] (3) edge node {} (4);
    \path [-] (4) edge node {} (5);
    \path [-] (5) edge node {} (6);
    \path [-] (6) edge node {} (7);
    
    \path [-] (6) edge node {} (9);
    \path [-] (9) edge node {} (10);

    \path [-] (3) edge node {} (11);
    \path [-] (11) edge node {} (12);
    \path [-] (12) edge node {} (13);
    
    \path [-] (3) edge node {} (14);
    \path [-] (14) edge node {} (15);
    
    \path [-] (1) edge node {} (17);
    \path [-] (17) edge node {} (18);
    \path [-] (18) edge node {} (19);
    \path [-] (19) edge node {} (20);
    \path [-] (20) edge node {} (21);
    \path [-] (21) edge node {} (22);
    
    \path [-] (4) edge node {} (23);
    \path [-] (23) edge node {} (24);
    \path [-] (24) edge node {} (25);
\end{scope}
\end{tikzpicture}}}
\qquad
\subfigure[$(M'-M_{x_1}') \cup M_{x_1}''$]{
\scalebox{\c}{
\begin{tikzpicture}
\begin{scope}[every node/.style={circle,thick,draw}]
    \node[shape=circle,draw=black,fill=black] (1) at (0,0) {};
    \node[shape=circle,draw=black,fill=white] (2) at (0.75,0.25) {};
    \node[shape=circle,draw=black,fill=black] (3) at (1.5,0) {};
    \node[shape=circle,draw=black,fill=white] (4) at (2.25,0.25) {};
    \node[shape=circle,draw=black,fill=black] (5) at (3,0) {};
    
    \node[shape=circle,draw=black,fill=white] (6) at (3,-0.5) {};
    \node[shape=circle,draw=black,fill=black] (7) at (3.25,-1) {};
    \node[shape=circle,draw=black,fill=white] (9) at (2.75,-1) {};
    \node[shape=circle,draw=black,fill=black] (10) at (2.75,-1.5) {};

    \node[shape=circle,draw=black,fill=white] (11) at (1.75,-0.5) {};
    \node[shape=circle,draw=black,fill=black] (12) at (1.75,-1) {};
    \node[shape=circle,draw=black,fill=white] (13) at (1.75,-1.5) {};
    
    \node[shape=circle,draw=black,fill=white] (14) at (1.25,-0.5) {};
    \node[shape=circle,draw=black,fill=black] (15) at (1.25,-1) {};
    
    \node[shape=circle,draw=black,fill=white] (17) at (0,-0.5) {};
    \node[shape=circle,draw=black,fill=black] (18) at (0,-1) {};
    \node[shape=circle,draw=black,fill=white] (19) at (0,-1.5) {};
    \node[shape=circle,draw=black,fill=black, label=left:$u_0$] (20) at (0,-2) {};
    \node[shape=circle,draw=black,fill=white] (21) at (0,-2.5) {};
    \node[shape=circle,draw=black,fill=black] (22) at (0,-3) {};
    
    \node[shape=circle,draw=black,fill=white] (23) at (2.25,-0.5) {};
    \node[shape=circle,draw=black,fill=black] (24) at (2.25,-1) {};
    \node[shape=circle,draw=black,fill=white] (25) at (2.25,-1.5) {};
\end{scope}

\begin{scope}
    \path [-] (1) edge node {} (2);
    \path [-] (2) edge node {} (3);
    \path [-] (3) edge node {} (4);
    \path [-] (4) edge node {} (5);
    \path [-] (5) edge node {} (6);
    \path [-] (6) edge node {} (7);
    
    \path [-] (6) edge node {} (9);
    \path [-] (9) edge node {} (10);

    \path [-] (3) edge node {} (11);
    \path [-] (11) edge node {} (12);
    \path [-] (12) edge node {} (13);
    
    \path [-] (3) edge node {} (14);
    \path [-] (14) edge node {} (15);
    
    \path [-] (1) edge node {} (17);
    \path [-] (17) edge node {} (18);
    \path [-] (18) edge node {} (19);
    \path [-] (19) edge node {} (20);
    \path [-] (20) edge node {} (21);
    \path [-] (21) edge node {} (22);
    
    \path [-] (4) edge node {} (23);
    \path [-] (23) edge node {} (24);
    \path [-] (24) edge node {} (25);
\end{scope}
\end{tikzpicture}}}

\caption{Example of Algorithm \ref{alg:a2_to_n2_X}}%
\label{fig:Alg2Example}%
\end{figure}

\noindent For Algorithm \ref{alg:a2_to_n2_X} to return a minimal dominating set, we need additional conditions on $X$. These conditions must guarantee that each vertex in $X$ is critical in $M'$ and each $T_x$ is disjoint. In the next theorem, we provide some sufficient conditions.

\begin{theorem}
\label{thm:a2_to_n2_X}
Let $M$ be a minimal dominating set of a tree $T$ with $X \subseteq N_2(M)$ and $A_2=N(X) \cap a_2(M)$. If
\begin{itemize}
\item[•] $A_2 \cup X$ induces a connected subtree, and
\item[•] $X$ forms an independent set in $T$,
\end{itemize}  

\noindent then Algorithm \ref{alg:a2_to_n2_X} terminates with a minimial dominating set $M'$ with $|M'| \geq |M|-|A_2|+|X|$.
\end{theorem}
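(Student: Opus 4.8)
The plan is to analyze Algorithm \ref{alg:a2_to_n2_X} in two stages --- the bulk modification $M' = (M \cup X - A) \cup N$, and then the clean-up for-loop --- and to show that the two hypotheses force enough tree structure for everything to localize to the pieces $T_x$. I would first record the structural consequences of the hypotheses. Since $A_2 \subseteq a_2(M)$, Observation \ref{obs:a2ImpliesIso} gives that $A_2$ is independent, so $A_2 \cup X$ is a subtree with both $A_2$ and $X$ independent. The key geometric fact to extract is that, because $A_2 \cup X$ is a connected induced subtree of the tree $T$, the unique $T$-path between any two of its vertices stays inside $A_2 \cup X$. From this (with independence of $X$) two things follow: (1) the $T$-path between distinct vertices of $X$ meets $A_2$, so distinct $x \in X$ lie in distinct components of $T - A_2$; and (2) no vertex outside $A_2 \cup X$ is adjacent to two vertices of $A_2$, since that would either create a cycle in $T$ or force the path to leave $A_2 \cup X$. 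Fact (1) identifies each $T_x$ with the component of $T - A_2$ containing $x$, so the $T_x$ are pairwise disjoint, meet the rest of $T$ only through the removed set $A_2$, and the only vertex of $T_x$ adjacent to $A_2$ is the root $x$ itself.

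Next I would show the bulk set $M' = (M - A) \cup X \cup N$ is a dominating set of $T$ with $|M'| \geq |M| - |A_2| + |X|$. The size bound is immediate from $N = N(A_1) \cap N_1(M)$ (writing $A = A_1 \cup A_2$, and noting $N(A_2) \cap N_1(M) = \emptyset$ by Observation \ref{obs:a2ImpliesIso}) together with $|N| \geq |A_1|$, exactly as in Lemma \ref{lem:a1n1}. For domination: each vertex of $A_2$ is dominated by its neighbour in $X$; each vertex of $N_1(M)$ whose unique dominator lay in $A_1$ is precisely a vertex of $N$ and is added; and a vertex of $N_2(M)$ cannot have all its dominators (of which there are at least two) removed, by Fact (2). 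Because the pieces meet the rest of $T$ only through $A_2 \not\subseteq M'$, restriction gives that $M_x' = M' \cap V(T_x)$ is a dominating set of $T_x$ and that domination inside each piece never relies on vertices outside it.

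The main work, and the main obstacle, is verifying that each pair $(T_x, M_x')$ satisfies hypotheses $(a)$ and $(b)$ of Theorem \ref{thm:algterm}, so that Algorithm \ref{alg:MakeMinimal} returns a minimal dominating set $M_x''$ of $T_x$ with $|M_x''| \geq |M_x'|$. Here I would exhibit $M_x'$ as the output of the Lemma \ref{lem:move_a1_to_n1} operation inside $T_x$, with $S = \{x\} \cup (M \cap V(T_x))$, with $A = A_1^x$ the critical children of $x$, and with $N = N^x$ their $N_1$-children; this needs that $S$ dominates $T_x$ (only $x$ touches $A_2$), that $A_1^x \subseteq a_1(S)$ and $N^x = N(A_1^x) \cap N_1(S)$, and that all of $A_1^x$ share the depth of a child of $x$. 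Since $M$ is minimal it has no supported vertices, so Lemma \ref{lem:move_a1_to_n1}$(iii)$--$(iv)$ forces every supported vertex of $M_x'$ to be a grandchild of some vertex of $N^x$ whose parent is not in $M_x'$; as all of $N^x$ have a common depth, all supported vertices share a depth, giving $(b)$, and the ``parent not in $M_x'$'' clause gives $(a)$. The root $x$ is itself critical in $M_x'$, since all of its $M$-neighbours, being critical in the minimal set $M$ and adjacent to $x \in X$, lie in $A$ and are removed, leaving $x$ with no neighbour in $M_x'$, whence Observation \ref{obs:IsoImpliesCrit} applies.

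Finally I would reassemble. The for-loop replaces each $M_x'$ by $M_x''$ on disjoint supports, so the output is $\bigcup_x M_x'' \cup (M \cap C)$, where $C$ collects the components of $T - A_2$ containing no vertex of $X$. Domination of $T$ then holds piece-by-piece, and minimality follows because each $M_x''$ is minimal in $T_x$, the privately dominated witness of any $m \in M_x''$ stays inside $T_x$ (its only outside neighbours are removed vertices of $A_2$, and there are no edges between distinct pieces), and the untouched part $M \cap C$ keeps the minimality it had in $M$. The size bound survives the for-loop since $|M_x''| \geq |M_x'|$ for every $x$, yielding $|M'| \geq |M| - |A_2| + |X|$ as claimed.
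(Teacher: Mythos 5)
Your overall architecture is the same as the paper's: identify each $T_x$ with the component of $T-A_2$ containing $x$, realize $M_x'$ as the Lemma~\ref{lem:move_a1_to_n1} operation applied to $S=\{x\}\cup(M\cap V(T_x))$ with $A=A_1^x$ the critical children of $x$ and $N=N^x$ their $N_1$-children, verify conditions $(a)$ and $(b)$ of Theorem~\ref{thm:algterm}, and reassemble over the disjoint pieces. But the pivotal step has a genuine gap. You assert that, since $M$ is minimal, Lemma~\ref{lem:move_a1_to_n1}$(iii)$--$(iv)$ force \emph{every} supported vertex of $M_x'$ to be a grandchild of some vertex of $N^x$. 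Part $(iv)$ only describes vertices that were critical \emph{in $S$} and became supported in $S'=M_x'$; it says nothing about vertices already supported in $S=\{x\}\cup(M\cap V(T_x))$, and minimality of $M$ does not make $S$ minimal: adding $x$ can itself create supported vertices. Concretely, writing $C$ for the children of $x$ in $T_x$, if $m\in M$ is a grandchild of $x$ whose unique private witness in $M$ is its parent $u\in C-M$, then adding $x$ gives $u$ a second dominator, $m$ loses its only witness, and $m$ is supported in $S$; since the $A\to N$ swap happens entirely in the branches below $C\cap M$, the vertex $m$ is still supported in $M_x'$, at depth $2$, and is not a grandchild of any vertex of $N^x$. (Explicit instance: take the path $z-c-x-u-m-m_c-z_c$ together with $x-a-w-m_w-z_w$, $M=\{a,c,m,m_c,m_w\}$, $X=\{x\}$; then $A_2=\{a\}$, $M_x'=\{x,z,m,m_c\}$, and $m$ is supported at depth $2$ while $N^x=\{z\}$ has no grandchildren.) Consequently your proof of $(b)$, which rests on ``all supported vertices share a depth,'' collapses. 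The conclusions $(a)$ and $(b)$ are still true, but they require the case split the paper performs: supported vertices of $M_x'$ descending from $C-M$ are exactly such depth-$2$ vertices with parent in $C-M$ (hence not in $M_x'$), those descending from $C\cap M$ are the depth-$4$ grandchildren of $N^x$ given by $(iv)$, and $(b)$ follows not from equal depth but because the two kinds lie in disjoint branches below distinct children of $x$.

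A secondary, more easily repaired slip: your Fact (2) is stated only for a vertex adjacent to two members of $A_2$, but your domination argument for the bulk set needs that a vertex of $N_2(M)\setminus X$ cannot have \emph{all} of its dominators inside $A=A_1\cup A_2$; the cases where one or both of those dominators lie in $A_1$ are not covered by Fact (2) as written. The same argument does rule them out --- with $m_1,m_2\in A$ adjacent to $w\notin A_2\cup X$ and $m_i$ adjacent to $x_i\in X$, the walk $x_1-m_1-w-m_2-x_2$ either closes a cycle (if $x_1=x_2$) or is the unique $x_1$--$x_2$ path and exits $A_2\cup X$, contradicting that this path must stay inside the connected induced subtree --- but you should state and use the fact at that level of generality.
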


\begin{proof}
As $A \cup X$ induces a connected subtree, then each $T_x$ are disjoint otherwise a forming a cycle in $T$.
Note that Algorithm \ref{alg:a2_to_n2_X} will terminates if and only if Algorithm \ref{alg:MakeMinimal} terminates for each input $M_x$ and $T_x$ rooted at $x$.
So fix an $x \in X$, root $T_x$ root at $x$ and let $M_x' = M' \cap V(T_x)$.
We will now show that conditions $(a)$ and $(b)$ of Theorem \ref{thm:algterm} hold for $M_x'$ in $T_x$.
We note that $x$ has no neighbours in $M'$ and therefore no neighbours in $M_x'$.
Thus $x \in a(M_x')$.
Let $C$ be the set of children of $x$ in $T_x$.
Thus every supported vertex in $M_x'$ is either a child of $C \cap M$ or $C - M$.
Suppose $s$ is a supported vertex which is a descendent from $C - M$ in $T_x$.
Then it must be a grandchild of $x$ (i.e. depth 2).
Moreover its parent is in $C-M$ and hence not in $M_x'$.
Suppose $s$ is a supported vertex which is a descendent from $C \cap M$ in $T_x$.
Let $M_x = M \cap V(T_x)$.
Every vertex other than $x$ has the same closed neighbourhood in $T_x$ as it did in $T$.
Thus $M_x$ is a minimal dominating set of $T_x$.
By the definition of $T_x$ every vertex in $C \cap M$ is in $a_1(M_x)$.
As $x \in a_2(M)$ then each vertex in $C \cap M$ still has a neighbour in $N_1(M_x)$.
Thus adding $x$ to $M_x$ does not make any of its children supported.
Thus $C \cap M \subseteq a_1(M_x \cup \{x\})$.
Note that

$$M_x'=(S-A) \cup N$$

\noindent where $S=M_x \cup \{x\}$, $A=C \cap M \subseteq a_1(S)$ and $N=N_{T_x}(A) \cap N_1(S)$.
Moreover, $S$ is a dominating set, each vertex in $A$ has the same depth in $T_x$ and each vertex in $N$ is a child of a vertex in $A$.
Therefore all four parts of Lemma \ref{lem:move_a1_to_n1} apply.
There from $(i)$ we have $|M_x'| \geq |M_x \cup \{x\}|=|M_x|+1$.
From $(iv)$ any supported vertex in $M_x'$ is a grandchild of some vertex in $N$ and does not have its parent in $M_x'$ (i.e. depth 4 in $T_x$).
Therefore every supported vertex in $M_x'$ does not have its parent in $M_x'$.
Hence $(a)$ of Theorem \ref{thm:algterm} holds.
Additionally if it supported in $M_x'$ it is either at depth 2 and a descendant of a vertex in $C-M$ or at depth 4 and a descendant of a vertex in $C \cap M$.
Therefore no supported vertex in $M_x'$ is a descendant of another supported vertex $M_x'$.
Hence $(b)$ of Theorem \ref{thm:algterm} holds.
Therefore Algorithm \ref{alg:MakeMinimal} will return a minimal dominating set $M_x'$ of $T_x$ for each $x \in X$ with $|M_x'| \geq |M_x \cup \{x\}|=|M_x|+1$.

We will now show that Algorithm \ref{alg:a2_to_n2_X} outputs a minimal dominating set $M'$.
We begin by showing $M'$ is a dominating set.
Note that $M' = \bigcup_{x \in X} M_x'$.
Each $M_x'$ is a minimal dominating set of $T_x$.
Thus each vertex which appears in some $T_x$ is dominated.
If a vertex was not in any $T_x$, then it is dominated unless it is also in $N_T[A_2]$.
Each vertex in $A_2$ necessarily has a neighbour in $X$ and is hence dominated.
If a vertex $u \notin X$ is adjacent to a vertex in $A_2$ then it was in $N_2(M)$.
Therefore $u$ had at least two neighbours in $M$.
Thus $u$ is dominating in $M'$ unless every neighbour of $u$ is in $A_2$.
However $A_2 \cup X$ induces a connected tree and hence $u$ having two neighbours in $A_2 \cup X$ woudl form a cycle in $T$.
Therefore $u$ is dominated in $M'$ and $M'$ is dominating set in $T$.
To show $M'$ is minimal, note that the only vertices which are potentially no longer critical in $M_x'$ are in $X$.
Note that the only neighbours of $x$ in $M'$ are in $M_x$ or $X$.
Recall that $x$ has neighbours in $M_x$.
Additionally, $X$ is an independent set.
Hence $x$ has no neighbours in $M'$.
So by Observation \ref{obs:IsoImpliesCrit}, each $x$ is critical in $M'$.

Lastly we show that the final output $M'$ is such that $|M'| \geq |M|-|A_2|+|X|$.
Recall that for each $x \in X$ we showed $|M_x'| \geq |M_x \cup \{x\}|=|M_x|+1$.
Note that each $M_x$ are disjoint and $\bigcup_{x \in X} M_x = M-A_2$.
Therefore when summing over all $x \in X$ we obtain $|M'| \geq |M|-|A_2|+|X|$.

\end{proof}

Theorem \ref{thm:a1_to_n1} and Theorem \ref{thm:a2_to_n2_X} give immediate result regards minimal dominating sets $M$ of the largest size $\Gamma(T)$. This result will allow us to show a matching from $M$ to $V-M$ which saturates $V-M$.

\begin{theorem}
\label{thm:minimal_properties}
If $M$ is a minimal dominating set of a finite tree with $|M|=\Gamma(T)$ then
\begin{itemize}
\item[$(i)$] $|a_1(M)|=|N_1(M)|$
\item[$(ii)$] If $X \subseteq N_2(M)$ then $|X| \leq |N(X) \cap a_2(M)|$.
\end{itemize}
\end{theorem}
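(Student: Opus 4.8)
The plan is to prove both parts by contradiction, exploiting the maximality of $M$: in each case I assume the claimed (in)equality fails and then invoke the reconfiguration theorems to manufacture a minimal dominating set strictly larger than $\Gamma(T)$, which is impossible. Part $(i)$ is the easier half and uses Theorem \ref{thm:a1_to_n1}; part $(ii)$ is the genuinely delicate half and uses Theorem \ref{thm:a2_to_n2_X}.

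For $(i)$, Lemma \ref{lem:a1n1} already gives $|a_1(M)| \le |N_1(M)|$, so I only need the reverse inequality. Each vertex of $N_1(M)$ has, by definition, exactly one neighbour in $M$, and that neighbour is forced to lie in $a_1(M)$ (removing it would leave the $N_1$-vertex undominated, and it has an $N_1$-neighbour). This defines a map $N_1(M)\to a_1(M)$ which is surjective, since every vertex of $a_1(M)$ has a neighbour in $N_1(M)$. If $|a_1(M)| < |N_1(M)|$ this map cannot be injective, so some $v\in a_1(M)$ has at least two neighbours in $N_1(M)$. Feeding this $v$ into Theorem \ref{thm:a1_to_n1} produces $M_0=(M-v)\cup N$ with $|N|\ge 2$, hence $|M_0|\ge|M|+1$, and the theorem returns a minimal dominating set of size at least $|M_0| > \Gamma(T)$ --- a contradiction. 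Thus equality holds.

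For $(ii)$, I first note the statement is exactly Hall's condition for matching $N_2(M)$ into $a_2(M)$ (recall from Observation \ref{obs:a2ImpliesIso} that every edge at an $a_2$-vertex leads into $N_2(M)$). So I suppose it fails and pick $X\subseteq N_2(M)$ of \emph{minimum} cardinality with $|N(X)\cap a_2(M)| < |X|$; write $A_2=N(X)\cap a_2(M)$. The goal is to show such a minimal counterexample automatically satisfies the hypotheses of Theorem \ref{thm:a2_to_n2_X}. First, $X\cup A_2$ must be connected: if it split into components, then since the deficiencies $|X_i|-|A_{2,i}|$ over the components sum to $|X|-|A_2|\ge 1$, some component would itself be a smaller counterexample. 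Second, $X$ must be independent: if an edge joined two vertices of $X$, deleting that edge splits the subtree $T[X\cup A_2]$ into two parts whose deficiencies again sum to $|X|-|A_2|\ge 1$, so one part is a proper counterexample, contradicting minimality. (Here $A_2$ is automatically independent, since an $a_2$-vertex has all neighbours in $V-M$.) With $X$ independent and $X\cup A_2$ connected, Theorem \ref{thm:a2_to_n2_X} yields a minimal dominating set $M'$ with $|M'|\ge |M|-|A_2|+|X| > \Gamma(T)$, the final contradiction.

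The main obstacle is precisely this reduction in $(ii)$: Theorem \ref{thm:a2_to_n2_X} only applies to configurations that are connected with $X$ independent, whereas an arbitrary Hall-violating $X$ need not be either. The two splitting arguments above --- isolating a deficient component, and cutting an internal $X$-edge while tracking how the neighbourhood $N(\cdot)\cap a_2(M)$ distributes across the two pieces --- are the crux; verifying that each split genuinely leaves one side with positive deficiency, and that the inherited $a_2$-neighbourhood of that side is exactly its portion of $A_2$, is where the care lies.
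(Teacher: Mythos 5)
Your proof is correct and follows essentially the same route as the paper: contradiction via the maximality of $M$, using Theorem \ref{thm:a1_to_n1} plus a pigeonhole/surjection argument for $(i)$, and a minimum-cardinality Hall-violating $X \subseteq N_2(M)$ fed into Theorem \ref{thm:a2_to_n2_X} for $(ii)$. If anything, your deficiency-splitting arguments (summing $|X_i|-|A_{2,i}|$ over components, and over the two pieces created by cutting an internal $X$-edge) carefully justify the connectivity and independence of the minimal counterexample, which the paper's proof merely asserts in one line each.
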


\begin{proof}
Suppose $M$ is a minimal dominating set with $|M|=\Gamma(T)$.
For each of $(i)$ and $(ii)$, we will use show that if the statement is not true then we can reconfigure $M$ to be a minimal dominating which is larger than $M$.

$(i)$ Suppose not, that is, suppose $|a_1(M)| \neq |N_1(M)|$
Then by Lemma \ref{lem:a1n1} we have $|a_1(M)| < |N_1(M)|$.
Thus by the pigeonhole principle there must be a vertex $v \in a_1(M)$ with at least two neighbours in $N_1(M)$.
By Theorem \ref{thm:a1_to_n1}, Algorithm \ref{alg:MakeMinimal} will produce a minimal dominating set $M_i$ with $|M_i| \geq |M_0|$ where $M_0=(M-v) \cup N$ and $N = N(v) \cap N_1(M)$.
In this case $N \geq 2$, so $|M_0| > |M|$ and hence $|M_i| > |M|$.
However, this contradicts the fact that $|M| = \Gamma(T)$.

$(ii)$ Suppose for some $X \subseteq N_2(M)$ that $|X| > |N(X) \cap a_2(M)|$.
Without loss of generality let $X$ is the smallest such subset of $N_2(M)$.
For simplicity, let $A_2(X)$ denote the set $N(X) \cap a_2(M)$.
Let $T'$ denote the subgraph of $T$ induced by the vertices $X$ and $A_2(X)$.
Note that $T'$ is a connected subtree of $T$, otherwise there would exist a smaller $X' \subset X$ with $|X'| > |A_2(X')|$.
Moreover $X$ must be an independent set in $T$, otherwise we could take a smaller $X'$ which did not induce an edge with $|X'| > |A_2(X')|$.
Therefore by Theorem \ref{thm:a2_to_n2_X}, Algorithm \ref{alg:a2_to_n2_X} terminates with minimal dominating set $M'$ with $|M'| \geq |M|-|A_2(X)|+|X|$. 
As $|X| > |A_2(X)|$ then $|M'| > |M|$.
However, this contradicts the fact that $|M| = \Gamma(T)$.
\end{proof}

The properties exhibited in Theorem \ref{thm:minimal_properties} satisfy Hall's marriage condition.
That is if a maximal dominating set $M$ of a tree $T$ has $|M| = \Gamma(T)$, then there is a matching from $M$ to $V-M$ which saturates $V-M$.
More specifically there are two matchings via the edges of $T$)..
One from $N_1(M)$ to $a_1(M)$ which saturates $N_1(M)$ and another from $N_2(M)$ to $a_2(M)$ which saturates $N_2(M)$.
We will refer to such a matching as a \emph{saturated critical matching}.
The significance of a maximum critical matching is that the addition of any subset $T \subseteq V-M$ to $M$ guarantees that we have reduced the number of critical vertices by at least $T$.
However, if $|M| < \Gamma(T)$ we can not guarantee a saturated critical matching.
Let $\rho_1(M)$ and $\rho_2(M)$ denote the size of largest critical matching from $N_1(M)$ to $a_1(M)$ and from $N_2(M)$ to $a_2(M)$ respectively.
We will bound the number of ``unmatched" vertices by the gap between $|M|$ and $\Gamma(T)$.

\begin{theorem}
\label{thm:unmatched}
Let $M$ be a minimal dominating set of a tree $T$. Then

$$\um(M) \leq 2(\Gamma(T) - |M|),$$

\noindent where $\um(M)=|N_1(M)| - \rho_1(M)+|N_2(M)|-\rho_2(M)$
\end{theorem}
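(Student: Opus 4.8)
The plan is to treat the two matching deficiencies $|N_1(M)|-\rho_1(M)$ and $|N_2(M)|-\rho_2(M)$ separately, bound each one by $\Gamma(T)-|M|$, and then combine the bounds. First I would reformulate the deficiencies via König's theorem (equivalently, the deficiency form of Hall's theorem). For the $N_1$/$a_1$ matching the situation is simple: every vertex of $N_1(M)$ has exactly one neighbour in $M$, and that neighbour is critical and lies in $a_1(M)$, so the bipartite graph between $N_1(M)$ and $a_1(M)$ is a disjoint union of stars centred at the vertices of $a_1(M)$ (each of which has a leaf by the definition of $a_1$). Hence $\rho_1(M)=|a_1(M)|$ and the first deficiency is $d_1:=|N_1(M)|-|a_1(M)|$. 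For the $N_2$/$a_2$ matching, König's theorem gives that the number of unsaturated vertices of $N_2(M)$ equals $d_2:=|N_2(M)|-\rho_2(M)=\max_{X\subseteq N_2(M)}\bigl(|X|-|N(X)\cap a_2(M)|\bigr)$, using Observation \ref{obs:a2ImpliesIso} to see that this matching indeed lives on tree edges into $a_2(M)$.

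Next I would establish $\Gamma(T)\ge |M|+d_1$ by building a minimal dominating set that exceeds $|M|$ by $d_1$. Note that $d_1=\sum_{v\in a_1(M)}(k_v-1)$, where $k_v$ is the number of neighbours of $v$ in $N_1(M)$. Each vertex $v$ with $k_v\ge 2$ is a candidate for the reconfiguration of Theorem \ref{thm:a1_to_n1}, a single application of which produces a minimal dominating set larger by $k_v-1$. The idea is to apply these moves successively (processing $a_1$-vertices from greatest depth first, so that later moves act on regions disjoint from the earlier ones) so that the gains accumulate to $d_1$; since any minimal dominating set has size at most $\Gamma(T)$, this yields $d_1\le \Gamma(T)-|M|$. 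This is essentially the quantitative refinement of Theorem \ref{thm:minimal_properties}$(i)$, which is exactly the case $d_1=0$.

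I would then establish $\Gamma(T)\ge |M|+d_2$ symmetrically, using Theorem \ref{thm:a2_to_n2_X}. Let $X^{*}\subseteq N_2(M)$ be a set achieving the maximum in the definition of $d_2$, and write $A_2=N(X^{*})\cap a_2(M)$. Exactly as in the proof of Theorem \ref{thm:minimal_properties}$(ii)$, I would arrange that $X^{*}$ is independent and that $X^{*}\cup A_2$ splits into connected subtrees on which Theorem \ref{thm:a2_to_n2_X} may be applied; each application is confined to a subtree disjoint from the others, so the size increases add up to $|X^{*}|-|A_2|=d_2$, giving a minimal dominating set of size at least $|M|+d_2$ and hence $d_2\le \Gamma(T)-|M|$. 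Combining the two bounds with the elementary inequality $d_1+d_2\le 2\max(d_1,d_2)$ yields $\um(M)=d_1+d_2\le 2(\Gamma(T)-|M|)$.

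The main obstacle is the accumulation step in both bounds: Theorem \ref{thm:a1_to_n1} and Theorem \ref{thm:a2_to_n2_X} are each stated for a single reconfiguration move (a set at one depth, respectively one connected witness), and it is not immediate that the deficiency-witnessing structure can be realized all at once. Indeed, the naive single-shot replacement $(M\setminus a_1(M))\cup N_1(M)$ need not be dominating, so one genuinely must sequence the moves and argue that the hypotheses — conditions $(a)$ and $(b)$ of Theorem \ref{thm:algterm}, the equal-depth requirement of Lemma \ref{lem:move_a1_to_n1}, and the connectivity/independence requirements of Theorem \ref{thm:a2_to_n2_X} — persist after each move and that the cumulative gain is exactly the full deficiency. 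I expect this bookkeeping to be the technical heart of the proof, most cleanly organized either by an induction on $\Gamma(T)-|M|$ (applying one move at a time and controlling how a single move changes $\um$) or by a direct partition of the witnessing sets into disjoint subtrees on which the reconfigurations act independently.
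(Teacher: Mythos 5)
Your reduction to matching deficiencies is sound: the bipartite graph between $N_1(M)$ and $a_1(M)$ is indeed a disjoint union of stars centred at $a_1(M)$, so $\rho_1(M)=|a_1(M)|$, and the defect form of Hall's theorem correctly identifies $|N_2(M)|-\rho_2(M)$ with $\max_{X\subseteq N_2(M)}\bigl(|X|-|N(X)\cap a_2(M)|\bigr)$. The genuine gap is in the step you yourself flag as the technical heart: nothing in your sketch actually delivers the accumulation of gains, and the two mechanisms you propose do not work as stated. For your $d_1$, Theorem \ref{thm:a1_to_n1} re-roots $T$ at the chosen vertex $v$, so \emph{every} vertex is a descendant of $v$ and the cascade of Algorithm \ref{alg:MakeMinimal} can propagate in any direction away from $v$; processing $a_1$-vertices ``deepest first'' in a fixed rooting therefore does not confine successive moves to disjoint regions, and after one move the set is no longer $M$, so the remaining $a_1$-vertices need not retain their criticality or their $N_1$-neighbour counts. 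For your $d_2$, Theorem \ref{thm:a2_to_n2_X} requires $A_2\cup X$ to induce a connected subtree and $X$ to be independent; a deficiency-maximizing $X^*$ need satisfy neither, and splitting it into components of the bipartite graph reintroduces exactly the same sequencing problem. Note also that your intermediate claims $d_1\le\Gamma(T)-|M|$ and $d_2\le\Gamma(T)-|M|$ are strictly stronger than the theorem (which only needs $d_1+d_2\le 2(\Gamma(T)-|M|)$) and are established nowhere in the paper, so you have taken on a harder statement than the one required.

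The paper sidesteps accumulation entirely by inducting downward on $|M|$: the base case $|M|=\Gamma(T)$ gives $\um(M)=0$ via Theorem \ref{thm:minimal_properties}, and when $|M|<\Gamma(T)$ with some deficiency positive, a \emph{single} reconfiguration (Theorem \ref{thm:a1_to_n1}, or the construction in the proof of Theorem \ref{thm:minimal_properties}$(ii)$) produces a strictly larger minimal dominating set $M'$, after which the inductive hypothesis is applied to $M'$ itself; since all deficiencies are then re-measured with respect to $M'$, no structure needs to persist across multiple moves, and the only thing to verify is the one-step inequality $\um(M)-\um(M')\le 2(|M'|-|M|)$, which the paper argues by a worst-case count of how many unmatched vertices one added vertex can absorb. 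That per-move inequality is precisely the missing ingredient in your fallback suggestion (``induction on $\Gamma(T)-|M|$, controlling how a single move changes $\um$''); had you stated and proved it, your fallback would coincide with the paper's proof. As it stands, the proposal assembles the right tools but leaves the load-bearing step unproven, and its preferred route rests on disjointness claims that fail.
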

\begin{proof}
Thus it suffices to show $\um(M) \leq 2(\Gamma(T) - |M|)$
We will prove the statement through induction on the size of $M$.

First suppose $|M|=\Gamma(T)$.
By Theorem \ref{thm:minimal_properties}$(i)$, we have that $|a_1(M)| = |N_1(M)|$.
By definition, each vertex in $a_1(M)$ has at least one neighbour in $N_1(M)$.
Moreover, each vertex in $N_1(M)$ has exactly one neighbour in $a_1(M)$.
Therefore there is a perfect matching from $a_1(M)$ to $N_1(M)$.
Thus $\rho_1(M)=|N_1(M)|$.
Note that Theorem \ref{thm:minimal_properties}$(ii)$ satisfies Hall's marriage condition.
Therefore there is a matching between $a_2(M)$ and $N_2(M)$ which saturates $N_2(M)$.
Thus $\rho_2(M)=|N_2(M)|$.
Thus $\um(M)=0$ and the inequality holds for $|M|=\Gamma(T)$.

Now let $|M| < \Gamma(T)$ and suppose that the inequality holds for all larger minimal dominating sets of $T$.
If $\rho_1(M) = |N_1(M)|$ and $\rho_2(M)=|N_2(M)|$ then clearly $f(M) \leq 2(\Gamma(T) - |M|)$.
So either $\rho_1(M) < |N_1(M)|$ or $\rho_2(M) < |N_2(M)|$.
Thus we are able to use the same construction from the proof of Theorem \ref{thm:minimal_properties} to obtain a strictly larger minimal dominating set $M'$.
By our inductive hypothesis $\um(M') \leq 2(\Gamma(T) - |M'|)$.
Thus it suffices to show that  $\um(M)-\um(M') \leq 2(|M'|-|M|)$.
Fix a maximum critical matching of $M$.
In the worst case, each critical matching was preserved in $M'$, each addition vertex in $M'$ was unmatched in $M$, each additional vertex in $M'$ also matched to a unmatched in $M$.
In this case each addition vertex in $M'$ would corresponds to two less unmatched vertices in $M'$.
Therefore $\um(M)-\um(M') \leq 2(|M'|-|M|)$ and hence our claim holds by induction.
%
%
%
%

\end{proof}

We are now ready for the upper bound on $a(S)$.

\begin{lemma}
\label{lem_a(S)upperbound}
If T is a tree then for any dominating set $S \in \mathcal{D}(T)$ then $|a(S)| \leq 2\Gamma(T)-|S|$.
\end{lemma}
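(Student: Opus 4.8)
The plan is to reduce the arbitrary dominating set $S$ to a minimal dominating set it contains and then pay for the extra vertices using the unmatched-vertex bound of Theorem \ref{thm:unmatched}. First I would fix a minimal dominating set $M \subseteq S$, obtained by repeatedly deleting supported vertices. Since a vertex that is critical in $S$ remains critical in every sub-dominating-set containing it, we have $a(S) \subseteq M$; writing $R = S \setminus M$ and letting $d$ be the number of vertices of $M$ that are supported in $S$, this gives $|a(S)| = |M| - d$ and $|S| = |M| + |R|$. Hence $|a(S)| + |S| = 2|M| - d + |R|$, and since Theorem \ref{thm:unmatched} rearranges to $2|M| + \um(M) \le 2\Gamma(T)$, the whole lemma reduces to the single inequality $|R| \le d + \um(M)$.

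To prove this inequality I would choose the two maximum critical matchings carefully. Because each vertex of $N_1(M)$ has a unique neighbour in $M$, the private neighbourhoods $N(m) \cap N_1(M)$ for $m \in a_1(M)$ are pairwise disjoint and nonempty, so a maximum $N_1(M)$--$a_1(M)$ matching has size $|a_1(M)|$ and can be taken to match each $m \in a_1(M)$ to a private neighbour lying outside $R$ whenever one exists. Fix such a matching together with any maximum $N_2(M)$--$a_2(M)$ matching; the total number of unmatched vertices of $N_1(M) \cup N_2(M)$ is then exactly $\um(M)$. Partitioning $R$ into its matched and unmatched parts, the unmatched part has size at most $\um(M)$, so it suffices to charge the matched vertices of $R$ injectively to distinct vertices of $M$ that become supported in $S$.

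For the charging, a matched vertex $r \in R \cap N_2(M)$ is paired with an adjacent $m \in a_2(M)$; by Observation \ref{obs:a2ImpliesIso} such an $m$ is isolated in $M$, so the neighbour $r \in R$ makes $m$ supported in $S$. A matched vertex $r \in R \cap N_1(M)$ is, by our choice of matching, the representative of some $m \in a_1(M)$ all of whose private neighbours lie in $R$; every such private neighbour is then dominated by itself in $S$, and $m$ still has a neighbour in $R$, so $m$ becomes supported. These charges land in the disjoint sets $a_1(M)$ and $a_2(M)$ and are injective, yielding at most $d$ matched vertices in $R$ and therefore $|R| \le d + \um(M)$, which closes the argument.

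The main obstacle is precisely the $a_1(M)$ case: a single matched vertex $r \in R \cap N_1(M)$ does not on its own force its partner to become supported, since that partner may have further, uncovered private neighbours. The device that rescues the argument is to build the $N_1(M)$--$a_1(M)$ matching so that it avoids $R$ whenever possible, so that a representative lies in $R$ only when all private neighbours of its partner lie in $R$, in which case the partner is necessarily supported. I would also check the boundary situation where an $a_1(M)$-vertex is itself isolated in $M$, noting that its private neighbours are among its neighbours, so ``all private neighbours in $R$'' already supplies the neighbour in $R$ needed to dominate it after its own removal.
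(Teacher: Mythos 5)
Your proof is correct, and its skeleton is the same as the paper's: pass to a minimal dominating set $M$ contained in $S$, observe that $a(S) \subseteq M$, charge vertices of $R = S - M$ injectively to vertices of $M$ that become supported in $S$ via the critical matchings, and finish with Theorem \ref{thm:unmatched}. The difference lies in how the charging step is justified, and here your version is actually more careful than the paper's. The paper splits into the cases $|M| = \Gamma(T)$ and $|M| < \Gamma(T)$: in the first case Theorem \ref{thm:minimal_properties}$(i)$ forces each vertex of $a_1(M)$ to have a unique neighbour in $N_1(M)$, so every matched partner automatically becomes supported; in the second case the paper merely appeals to ``a similar argument,'' which is not immediate, since a vertex $m \in a_1(M)$ matched to some $r \in R$ may have further neighbours in $N_1(M)$ lying outside $S$, in which case $m$ remains critical in $S$. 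Your device of selecting the $N_1(M)$--$a_1(M)$ matching (one leaf per star, legitimate because each vertex of $N_1(M)$ has exactly one neighbour in $a_1(M)$) so that it avoids $R$ whenever possible closes precisely this hole: a matched $r \in R \cap N_1(M)$ then certifies that \emph{all} private neighbours of its partner lie in $R \subseteq S$ and are therefore self-dominated, so the partner is supported, with your final remark correctly handling the partner that also needs a dominator for itself. As a bonus, your argument needs no case split and works for any minimal $M \subseteq S$ rather than the largest one, since the inequality $|R| \leq d + \text{unmatched}(M)$ together with Theorem \ref{thm:unmatched} does all the work uniformly.
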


\begin{proof}
For a dominating set $S$ of a tree $T$, let $M$ be the largest minimal dominating contained in $S$.
Fix a maximum critical matching from $V-M$ to $M$.

First suppose $|M|=\Gamma(T)$.
From \ref{thm:unmatched} we have $\um(M)=0$, therefore the matching saturates $V-M$. 
For any vertex $v \in S-M$ let $m(v)$ denote its matched neighbour in $M$.
Note that if $v \in N_1(M)$ then its match neighbour in $m(v) \in a_1(M)$ only has $v$ as a neighbour in $N_1(M)$.
Thus $m(v)$ would become supported if $v$ was added to $M$.
Additionally, if $v \in N_2(M)$ then its match neighbour in $m(v) \in a_2(M)$ would become supported if $v$ was added to $M$.
Thus for each vertex in $S-M$, its matched neighbour would become supported upon its addition to $M$.
Therefore $a(S) \leq |a(M)|-|S-M|$.
As $a(M)=M$ and $|M|= \Gamma(T)$ we get $|a(S)| \leq 2\Gamma(T)-|S|$.

Now suppose $|M| < \Gamma(T)$.
Note that $|a(S)| \leq |M|$.
Therefore $|a(S)| \leq 2\Gamma(T)-|S|$ holds when $|M|\leq 2\Gamma(T)-|S|$.
Equivalently, $|a(S)| \leq 2\Gamma(T)-|S|$ holds when $|S-M|\leq 2(\Gamma(T)-|M|)$.
So suppose $|S-M| > 2(\Gamma(T)-|M|)$ and let $t=|S-M| - 2(\Gamma(T)-|M|)$.
From \ref{thm:unmatched} we have $\um(M) \leq 2(\Gamma(T)-|M|)$. 
Therefore at least $t$ vertices in $S-M$ are critically matched.
It follows from a similar argument used when $|M|=\Gamma(T)$ that $|a(S)| \leq |a(M)|-t$.
Recall that $M = a(M)$ so we obtain

$$|a(S)| \leq |a(M)|-t = |M|-(|S-M| - 2(\Gamma(T)-|M|))=2\Gamma(T)-|S|.$$
\end{proof}

\begin{theorem}
\label{thm:decreasing}
Let $T$ be a tree of order $n$. Then

$$ d_{\left\lceil\frac{n+2\Gamma(T)-2}{3} \right\rceil} \geq \cdots\geq d_{n-1} \geq d_{n},$$

\noindent where $d_i$ denotes the number of dominating sets in $T$ of size $i$.
\end{theorem}

\begin{proof}
For any dominating set $S$ it follows from from Lemma \ref{lem_a(S)upperbound} that  $a(S) \leq 2\Gamma(T) - |S|$.
Therefore $ a(T,i) \leq (2\Gamma(T) - i) d_i$.
By Observation \ref{obs:adi}, $d_{i-1} \geq d_{i}$ if and only if $a(T,i) \leq (2i-n-1)d_i$.
Thus $d_{i-1} \geq d_i$ when $i \geq \frac{n+2\Gamma(T)+1}{3}$.
As $i$ must be an integer, we obtain our result. 
\end{proof}

If follows from Theorem \ref{thm:increasing} and Theorem \ref{thm:decreasing} that the domination polynomial of nearly-well dominated trees is unimodal.

\begin{corollary}
Let $T$ be a tree. If $\Gamma(T) - \gamma(T) < 3$ then $D(T,x)$ is unimodal.
\end{corollary}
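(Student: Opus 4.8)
The plan is to stitch together the two monotone segments furnished by Theorem \ref{thm:increasing} and Theorem \ref{thm:decreasing}, and to verify that under the hypothesis $\Gamma(T)-\gamma(T)<3$ they leave at most one pair of consecutive coefficients uncompared. Write $\gamma=\gamma(T)$ and $\Gamma=\Gamma(T)$, and set
$$k_1=\left\lfloor\frac{n+2\gamma+1}{3}\right\rfloor, \qquad k_2=\left\lceil\frac{n+2\Gamma-2}{3}\right\rceil.$$
First I would record the trivial left tail: since no dominating set has fewer than $\gamma$ vertices, $d_i=0$ for $0\le i<\gamma$, so $d_0\le d_1\le\cdots\le d_\gamma$ holds for free. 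Theorem \ref{thm:increasing} extends this to $d_0\le\cdots\le d_{k_1}$, while Theorem \ref{thm:decreasing} gives $d_{k_2}\ge\cdots\ge d_n$. Equivalently, the comparison $d_{i-1}\le d_i$ is known for every $i\le k_1$, and $d_{i-1}\ge d_i$ is known for every $i\ge k_2+1$.

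The crux is a short floor/ceiling estimate showing $k_2\le k_1+1$ whenever $\Gamma-\gamma\le 2$. Setting $m=n+2\gamma$ and $d=\Gamma-\gamma\in\{0,1,2\}$, one has $k_1=\lfloor (m+1)/3\rfloor$ and $k_2=\lceil (m+2d-2)/3\rceil$, and a direct case check on $m\bmod 3$ (for each of $d=0,1,2$) yields $k_2-k_1\le 1$, with equality occurring only when $d=2$ and, in a single residue class, when $d=1$. I expect this bookkeeping to be the only place genuine care is needed: the crude bound $\lceil x\rceil-\lfloor y\rfloor<(x-y)+2$ gives only $k_2-k_1\le 2$ when $d=2$, which is too weak, so the residues must be inspected individually.

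Finally I would argue that at most one uncompared adjacent pair cannot destroy unimodality. If $k_2\le k_1$, the two segments already cover every index, so taking the mode at $k_1$ gives $d_0\le\cdots\le d_{k_1}\ge\cdots\ge d_n$ directly. If instead $k_2=k_1+1$, the only comparison not supplied by the theorems is $d_{k_1}$ versus $d_{k_1+1}$; but we still know $d_{k_1-1}\le d_{k_1}$ from Theorem \ref{thm:increasing} and $d_{k_1+1}\ge d_{k_1+2}\ge\cdots\ge d_n$ from Theorem \ref{thm:decreasing}, so whichever way that lone pair compares the sequence is unimodal, with mode at $k_1$ when $d_{k_1}\ge d_{k_1+1}$ and at $k_1+1$ otherwise. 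Hence $D(T,x)$ is unimodal. The main obstacle is conceptual rather than computational: recognising that the gap between the two guaranteed segments is at most one index precisely when $\Gamma-\gamma\le 2$, and that a single unresolved comparison straddling the peak is always harmless.
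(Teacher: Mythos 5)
Your proposal is correct and is exactly the argument the paper intends: the corollary is stated as an immediate consequence of Theorem \ref{thm:increasing} and Theorem \ref{thm:decreasing}, and your residue-by-residue verification that $\left\lceil\frac{n+2\Gamma(T)-2}{3}\right\rceil \leq \left\lfloor\frac{n+2\gamma(T)+1}{3}\right\rfloor + 1$ when $\Gamma(T)-\gamma(T)\leq 2$, together with the observation that a single uncompared adjacent pair straddling the peak cannot break unimodality, simply fills in the bookkeeping the paper leaves implicit.
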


\section{Conclusion}
In this paper we showed that not all trees have log-concave domination polynomial.
Additionally we made some progress on the unimodality conjecture in \cite{IntroDomPoly2014} by showing nearly-well-dominated trees have unimodal domination polynomial.
Certainly more investigation into the unimodality conjecture for trees is warranted.

We conclude by extending our results to a related average graph parameter.
The average size of a dominating set in a graph, $\avd(G)$, is typically calculated as one might expect. That is, find all dominating sets in the graph and then take the average of their sizes. That is

$$\avd(G) = \sum\limits_{S \in \mathcal{D}(G)}\frac{|S|}{|\mathcal{D}(G)|}.$$

\noindent  In \cite{2021Beaton}, it was shown that $\avd(G)$ can be calculated using by adding up the total number of critical vertices over all dominating sets. That is

$$\avd(G) = \frac{n}{2} + \sum\limits_{S \in \mathcal{D}(G)}\frac{|a(S)|}{2|\mathcal{D}(G)|}$$

\noindent The bounds for $a(S)$ found in Lemma \ref{lem_a(S)lowerbound} and Lemma \ref{lem_a(S)upperbound} then give us new bounds on $\avd(G)$

\begin{theorem}
\label{thm:avdbound}
Let $T$ be a tree. Then

$$\frac{n+2\gamma(T)}{3} \leq \avd(T) \leq \frac{n+2\Gamma(T)}{3}.$$
\end{theorem}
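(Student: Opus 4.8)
The plan is to use the identity for the average order of a dominating set in terms of critical vertices, namely
$$\avd(T) = \frac{n}{2} + \sum_{S \in \mathcal{D}(T)}\frac{|a(S)|}{2|\mathcal{D}(T)|},$$
which is quoted from \cite{2021Beaton} just above the statement. Since this expresses $\avd(T)$ as a constant $n/2$ plus a scaled average of $|a(S)|$ over all dominating sets, the bounds on $\avd(T)$ will follow immediately by substituting the pointwise bounds on $|a(S)|$ that were already established in the previous two sections.

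First I would invoke Lemma \ref{lem_a(S)lowerbound}, which gives $|a(S)| \geq 2\gamma(T) - |S|$ for every dominating set $S$, and Lemma \ref{lem_a(S)upperbound}, which gives $|a(S)| \leq 2\Gamma(T) - |S|$ for every dominating set $S$. Summing the lower bound over all $S \in \mathcal{D}(T)$ and dividing by $2|\mathcal{D}(T)|$ yields
$$\sum_{S \in \mathcal{D}(T)}\frac{|a(S)|}{2|\mathcal{D}(T)|} \geq \sum_{S \in \mathcal{D}(T)}\frac{2\gamma(T) - |S|}{2|\mathcal{D}(T)|} = \gamma(T) - \frac{1}{2}\sum_{S \in \mathcal{D}(T)}\frac{|S|}{|\mathcal{D}(T)|} = \gamma(T) - \frac{1}{2}\avd(T),$$
where the last equality uses the definition of $\avd(T)$ as the average of $|S|$. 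The analogous computation with the upper bound gives the reverse inequality with $\Gamma(T)$ in place of $\gamma(T)$.

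Finally I would combine these with the critical-vertex identity. Substituting the lower bound into $\avd(T) = \frac{n}{2} + \sum_S \frac{|a(S)|}{2|\mathcal{D}(T)|}$ gives $\avd(T) \geq \frac{n}{2} + \gamma(T) - \frac{1}{2}\avd(T)$, which rearranges to $\frac{3}{2}\avd(T) \geq \frac{n}{2} + \gamma(T)$, i.e. $\avd(T) \geq \frac{n + 2\gamma(T)}{3}$. The same manipulation with the upper bound yields $\avd(T) \leq \frac{n + 2\Gamma(T)}{3}$, completing the proof.

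I expect no serious obstacle here: the entire argument is a two-line substitution once the pointwise bounds of Lemma \ref{lem_a(S)lowerbound} and Lemma \ref{lem_a(S)upperbound} are in hand, and the only mild subtlety is recognizing that $\avd(T)$ appears on both sides of the intermediate inequality and must be collected algebraically. All the genuine difficulty was already absorbed into establishing the upper bound on $|a(S)|$ (Lemma \ref{lem_a(S)upperbound}), which relied on the reconfiguration algorithms and the unmatched-vertex counting of Theorem \ref{thm:unmatched}; this final theorem is essentially a corollary packaging those results in terms of the average.
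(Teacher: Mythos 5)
Your proof is correct and is exactly the argument the paper intends: it combines the identity $\avd(T) = \frac{n}{2} + \sum_{S \in \mathcal{D}(T)}\frac{|a(S)|}{2|\mathcal{D}(T)|}$ with the pointwise bounds $2\gamma(T)-|S| \leq |a(S)| \leq 2\Gamma(T)-|S|$ from Lemma \ref{lem_a(S)lowerbound} and Lemma \ref{lem_a(S)upperbound}, then solves for $\avd(T)$, which is precisely the route the paper sketches (it states the theorem without writing out the proof). Your algebra, including collecting the $\avd(T)$ term that appears on both sides, is correct.
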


\bibliographystyle{abbrv}
\bibliography{mybib}

\end{document}